\tikzstyle{bag} = [align=center]
\setlist[enumerate]{leftmargin=.5in}
\setlist[itemize]{leftmargin=.5in}
\crefname{hypothesis}{Hypothesis}{Hypotheses}
\title{
Efficient PDE-constrained optimization under high-dimensional uncertainty using Derivative-Informed Neural Operators
\thanks{Submitted to the editors DATE.
\funding{This research partially
     supported by DOE grants DE-SC0019303 and DE-SC0023171;
     NSF DMS grant 2012453; and DOD MURI FA9550-21-1-0084.}}
}
\author{Dingcheng Luo\thanks{Oden Institute for Computational Engineering and Sciences, The University of Texas at Austin, Austin, TX, USA
(\email{dc.luo@utexas.edu}).
}
\and Thomas O'Leary-Roseberry\footnotemark[2]
\and \linebreak Peng Chen\thanks{
  School of Computational Science and Engineering, Georgia Institute of Technology, Atlanta, GA, USA
}
\and Omar Ghattas\footnotemark[2] \thanks{
  Walker Department of Mechanical Engineering, The University of Texas at Austin, Austin, TX, USA
}
}
\newcommand{\cA}{\mathcal{A}}
\newcommand{\cB}{\mathcal{B}}
\newcommand{\cC}{\mathcal{C}}
\newcommand{\cJ}{\mathcal{J}}
\newcommand{\cL}{\mathcal{L}}
\newcommand{\cM}{\mathcal{M}}
\newcommand{\cN}{\mathcal{N}}
\newcommand{\cP}{\mathcal{P}}
\newcommand{\cU}{\mathcal{U}}
\newcommand{\cX}{\mathcal{X}}
\newcommand{\cY}{\mathcal{Y}}
\newcommand{\cZ}{\mathcal{Z}}
\newcommand{\bE}{\mathbb{E}}
\newcommand{\bR}{\mathbb{R}}
\newcommand{\ad}{\mathrm{ad}}
\newcommand{\st}{\text{subject to }}
\newcommand{\CVaR}{\mathrm{CVaR}}
\newcommand{\NN}{\mathrm{NN}}
\newcommand{\PDE}{\mathrm{PDE}}
\newcommand{\HS}{\mathrm{HS}}
\newcommand{\symm}{\mathrm{symm}}
\begin{document}

\maketitle

\begin{abstract}
We propose a novel machine learning framework for solving optimization problems governed by large-scale partial differential equations (PDEs) with high-dimensional random parameters. 
Such optimization under uncertainty (OUU) problems may be computational prohibitive using classical methods, particularly when a large number of samples is needed to evaluate risk measures 
at every iteration of an optimization algorithm, where each sample requires the solution of an expensive-to-solve PDE. 
To address this challenge, we propose a new neural operator approximation of the PDE solution operator that has the combined merits of (1) accurate approximation of not only the map from the joint inputs of random parameters and optimization variables to the PDE state, but also its derivative with respect to the optimization variables, (2) efficient construction of the neural network using reduced basis architectures that are scalable to high-dimensional OUU problems, and (3) requiring only a limited number of training data to achieve high accuracy for both the PDE solution and the OUU solution. 
We refer to such neural operators as multi-input reduced basis derivative informed neural operators (MR-DINOs).
We demonstrate the accuracy and efficiency our approach through several numerical experiments, i.e. the risk-averse control of a semilinear elliptic PDE and the steady state Navier--Stokes equations in two and three spatial dimensions, each involving random field inputs.
Across the examples, MR-DINOs offer $10^{3}$---$10^{7} \times$ reductions in execution time, and are able to produce OUU solutions of comparable accuracies to those from standard PDE based solutions while being over $10 \times$ more cost-efficient after factoring in the cost of construction. 

\end{abstract}

\begin{keywords}
PDE-constrained optimization, optimization under uncertainty, 
neural operator, operator learning, scientific machine learning, 
adjoint methods, reduced basis, dimension reduction
\end{keywords}


\begin{AMS}
  49M41, 65C20, 65D15, 68T07, 75D55, 90C15, 90C90, 93E20
\end{AMS}

\section{Introduction}\label{section:intro}

PDE-constrained optimization problems arise in many computational science and engineering fields. 
Canonical examples of such problems include optimal design, where the goal is to find the best system configuration given constraints, and optimal control, where the aim is to determine the optimal operation of a system while adhering to constraints.
In real-world applications, uncertainties are inevitable and arise from many sources in PDE models, e.g., PDE coefficients that parametrize the system properties, initial and boundary conditions, source terms, and computational geometries.
In order to achieve robustness of optimal solutions, it is crucial to account for these uncertainties in solving PDE-constrained optimization problems.
In such PDE-constrained optimization under uncertainty (OUU) problems, the uncertainty is modeled by a probability distribution, and the optimization objective is formulated using risk measures of a performance function, which can often be written as the integral of a scalar quantity over the probability distribution.

Solution of PDE-constrained OUU problems is challenging for the following reasons. 
(1) Each optimization iteration requires solving numerous PDEs to estimate the risk measure objective, e.g. by sample average approximation (SAA). 
Accurately estimating optimization risk measures may require a large number of samples, especially for risk measures that focus on tail probabilities and rare events. 
(2) To scale up to high-dimensional optimization variables, methods that compute or approximate the Hessian are required, leading to the need for additional linearized (adjoint) PDEs to be solved at each sample.
(3) The space of the random parameters may be high-dimensional, or even infinite-dimensional, 
 which precludes the use of deterministic quadrature methods, which suffer from the curse of dimensionality. 
As a result of these computational challenges, solving PDE-constrained OUU problems for complex systems, such as large-scaled, multiphysics, or multiscale systems, is often computationally prohibitive using traditional PDE solver-based methods. Recent developments to partially address these challenges include methods such as multigrid \cite{Borzi10}, multifidelity \cite{NgWillcox14}, multilevel \cite{AliUllmannHinze17, ChenQuarteroniRozza16}, stochastic Galerkin \cite{KunothSchwab16}, stochastic collocation \cite{KouriHeinkenschloosVanBloemenWaanders12,TieslerKirbyXiuEtAl12}, Taylor approximation \cite{AlexanderianPetraStadlerEtAl17,  ChenGhattas21, ChenHabermanGhattas21,ChenVillaGhattas19}, model reduction \cite{AllaHinzeKolvenbachEtAl19, ChenQuarteroni14, LassUlbrich17, ZahrCarlbergKouri19}, and neural networks \cite{EigelHaaseNeumann22,GuthSchillingsWeissmann21}. Despite the progress, the computational challenges remain formidable, particularly for OUU problems constrained by large-scale nonlinear PDEs under high-dimensional uncertainties.

In this work, we investigate the feasibility of overcoming the above computational challenges via a new development of neural operators. Neural operators have gained significant interest in recent years because of their remarkable ability to efficiently approximate high-dimensional mappings such as those arising in parametric PDE problems. 
We propose a novel framework that utilizes neural operators in the solution of PDE-constrained OUU problems. 
In this framework, the neural operator learns a mapping over a product measure of the random parameter distribution and an auxiliary distribution for the optimization variables that covers the feasibility sets of the optimization problems for which the neural operator is constructed. 
This presents a challenging task, since the mapping is over a product measure that is formally infinite-dimensional in at least one of the inputs. 
Mesh-dependent neural network strategies often suffer from deteriorating performance as the dimension of the problem increases. 
To avoid this difficulty, we use reduced bases to encode or compress the high-dimensional uncertain parameter field and the PDE state, which has gained traction as a popular architectural strategy for neural operators \cite{ BhattacharyaHosseiniKovachkiEtAl2021,FrescaManzoni2022,HesthavenUbbiali18,LuXuhuiShengzeEtAl2022,OLeary-RoseberryDuChaudhuriEtAl22,OLeary-RoseberryVillaChenEtAl22}. 
Moreover, a key contribution is to train the neural operator not only on the input-output solution map but also on its derivative with respect to the optimization variables. 
Building on recent work, the derivative training data can be efficiently computed and imposed in the training process by using reduced basis neural operators \cite{OLearyRoseberryChenVillaEtAl22}. 
This derivative-informed neural operator (DINO) strategy allows us to obtain significantly improved approximations not only of the joint parametric map but also of derivative quantities such as gradients, which are essential for efficient optimization methods in high dimensions.

We demonstrate the efficiency and accuracy of our proposed method on three challenging OUU problems subject to random field uncertainties; an optimal source control of a semilinear elliptic PDE, and an optimal boundary control of flow around a bluff body governed by Navier--Stokes equations in both two and three space dimensions.
We consider a challenging risk measure, the conditional value at risk (CVaR), which typically requires a large number of samples for accurate optimal solutions. 
We show that our neural operators offer reductions in execution time by factors of $10^{3}-10^{7}$ depending on the specific PDEs. 
Moreover, for the same quality of the OUU solution, the neural operators are over $10 \times$ more cost-efficient than a traditional PDE-based optimization method over a single optimization run, even after factoring in the construction cost for the neural operator.
Derivative training proves to be critical for accurate approximation of the solution operator and more importantly its derivatives, 
yielding OUU solutions of much higher quality than their counterparts without the derivative training.
Once trained, the neural operators can be reused to solve a family of OUU problems, e.g. with alternative optimization objectives and risk measures, at virtually no additional cost.

\subsection{Related work}

In recent years there has been significant work on developing neural operators for approximating high-dimensional, complex parametric maps arising in PDE problems \cite{FrescaManzoni2022,JinMengLu2022,KovachkiLiLiuEtAl2021,LiKovachkiAzizzadenesheliEtAl2020b,LiKovachkiAzizzadenesheliEtAl2020a,NelsenStuart2020,OLeary-RoseberryDuChaudhuriEtAl22,OLeary-RoseberryVillaChenEtAl22,RaissiPerdikarisKarniadakis2019,YuLuMengEtAl2022}. 
Additionally there has been interest in deploying neural operators to solve ``outer-loop'' problems such as Bayesian inverse problems \cite{CaoOLearyRoseberryJhaEtAl2022,LiKovachkiAzizzadenesheliEtAl2020a}, Bayesian optimal experimental design \cite{WuOLearyRoseberryChenEtAl2023}, 
optimal design \cite{DuOLearyRoseberryChaudhuriEtAl2023,OLeary-RoseberryDuChaudhuriEtAl22}.
In particular, neural operators have been considered as surrogates for a variety of deterministic PDE-constrained optimization problems in \cite{HwangLeeShinEtAl2022,KeilKleikampLorentzenEtAl2022,LuPestourieJohnsonEtAl22,ShuklaOommenPeyvanEtAl23,ZhaoLindellWetzstein22}, in addition to two recent works \cite{EigelHaaseNeumann22,GuthSchillingsWeissmann21} considering the use of neural networks 
for PDE-constrained OUU problems.


Specifically, in \cite{EigelHaaseNeumann22} the authors consider the use of neural networks for topology optimization under uncertain material parameters and loading conditions governed by linear elasticity using a phase-field based formulation. 
The neural networks output gradients with respect to the optimization variable given the states and gradients at previous iterations. 
This replaces the gradient computation step, which involves solving the adjoint PDE and an additional gradient PDE that arises as part of the phase-field formulation, thereby accelerating the optimization process.
We note that the approach of \cite{EigelHaaseNeumann22} differs from our proposed approach in that the state PDE is still solved during the optimization, as the states are used as inputs to the neural network.

Neural networks have also been considered for PDE-constrained optimization problems using an
``all-at-once'' or ``one-shot'' approach. 
Here, neural networks are used to represent the state and optimization variables, and potentially additional model parameters. 
The training of the neural network attempts to optimize the weights of the neural network using a loss function defined in terms of the optimization objective and the PDE residual, thereby trying to simultaneously achieve optimality (objective minimization) and feasibility (satisfying the PDE). 
Examples of this approach include \cite{HaoYingSuEtAl2022,LuPestourieYaoEtAl21,WangBhouriPerdikarisEtAl2021} for deterministic optimization problems and \cite{GuthSchillingsWeissmann21} in the OUU setting. 
This approach aims to solve single instances of the optimization problem, 
and the trained neural networks are not intended to be reused.
Instead, a new neural network needs to be trained for different choices of optimization performance objective, control cost, and constraints. 
Moreover, the training of the neural network may still be computationally expensive due to the ill-conditioning of the optimization problem involving both the optimization objective and the PDE residual.

The remainder of this paper is organized as follows. In Section \ref{section:ouu}, we introduce the formulation for PDE-constrained OUU problems. This is followed by a presentation of the proposed neural operator architecture in Section \ref{section:neural_operators}. Numerical results are then shown in Section \ref{section:numerical_results} before concluding with some remarks in Section \ref{section:conclusions}.

\section{PDE-constrained optimization under uncertainty}\label{section:ouu}

We consider systems that consist of a state $u \in \cU$, random parameters $m \in \cM$, and optimization/control variables $z \in \cZ$, where $\cU, \cM, \cZ$ are the respective Hilbert spaces to which they belong. 
The system is governed by a PDE 
written abstractly as 
\begin{equation}\label{eq:pde}
  R(u, m, z) = 0,
\end{equation}
where $R : \cU \times \cM \times \cZ \rightarrow \cU'$ is a differential operator and $\cU'$ is the dual of $\cU$. 
We assume that 
the PDE \eqref{eq:pde} admits a unique solution 
$u = u(m,z): \cM \times \cZ \rightarrow \cU$ for each given $m$ and $z$, i.e. the PDE problem is well-posed 
in the spaces of $m$ and $z$.

The uncertainty in $m$ is described by its distribution $\nu_{m}$, which is a measure over the Borel sigma algebra $\cB(\cM)$. 
In this work, we consider $m$ to be a random field, in particular a Gaussian random field, $\nu_m = \cN(\bar{m}, \cC)$, with a mean $\bar{m} \in \cM$ and a covariance operator of Mat\'ern class $\cC = \cA^{-\alpha}$ \cite{LindgrenRueLindstroem11}, where $\cA$ is an elliptic differential operator, e.g., $\cA = -\gamma \Delta + \delta I$ with Laplacian $\Delta$, identity $I$, and homogeneous Neumann boundary condition. The parameters $\alpha, \gamma, \delta > 0$ control the smoothness, variance, and correlation of the random field. We consider a performance function $Q:\cU\rightarrow \bR$ as a function of the state variable $u(m, z)$ that measures the performance of the system at a given $m$ and $z$. 
Due to the stochasticity of $m$, the quantity $Q(u(m,z))$ is a random variable. 
Thus, the PDE-constrained optimization problem is typically formulated in terms of a risk or statistical measure $\rho(Q)$ of 
$Q$. 
We can therefore write the PDE-constrained OUU problem as
\begin{equation}\label{eq:ouu}
  \min_{z \in \cZ_{ad}} \cJ(z) := \rho(Q)(z) + \cP(z),
\end{equation}
where $\cZ_{ad} \subset \cZ$ is an admissible set of the optimization variable $z$ and $\cP(z)$ is a penalization or regularization term that controls the cost or regularity of $z$. 



\subsection{Risk measures}


In the OUU problem, the risk measure $\rho$ quantifies the uncertainty in the performance function $Q$ due to the random parameter $m$. 
This effectively specifies a statistical goal we have in optimizing the PDE system over the distribution $\nu_m$. For example, one may be interested in optimizing the average performance of the system. This can be achieved using the expectation
\begin{equation}
  \rho_{\text{Mean}}(Q)(z)= \bE_{\nu_m}[Q(u(\cdot, z))]
\end{equation}
as a risk-neutral measure. 
In engineering applications, large values of $Q$ often correspond to undesirable or even failure states of the system. 
Although occurrence of such events may be rare, they can have catastrophic consequences. 
In such cases, it is insufficient to consider the expectation alone. Instead one can use risk-averse measures that account for the risk of large deviations from the mean.


In this work, we will consider the superquantile, or conditional value-at-risk (CVaR) \cite{RockafellarUryasev00}. Originally developed for financial risk management, the CVaR has become of a risk measure of interest in engineering applications and PDE-constrained OUU \cite{ChaudhuriKramerNortonEtAl22, KodakkalKeithKhristenkoEtAl22, KouriSurowiec16, LeeKramer23}. 
For a value $\beta \in [0,1]$, the $\beta$-quantile of $Q$, or the $\beta$-value at risk, is defined as
\begin{equation}\label{eq:VaR}
  \mathrm{VaR}_{\beta}[Q](z) := F^{-1}_{Q(u(\cdot, z))}(\beta),
\end{equation}
where $F_{Q((\cdot, z))}$ is the cumulative distribution function of $Q$.
The CVaR is then defined as the conditional expectation of $Q$ given that it exceeds the $\beta$-quantile, i.e.,
\begin{equation}\label{eq:CVaR}
  \rho_{\text{CVaR},\beta}(Q)(z) := \mathrm{CVaR}_{\beta}[Q] = \frac{1}{1-\beta} \bE_{\nu_m}[Q(u(\cdot, z)) 1_{Q(u(\cdot, z)) > \mathrm{VaR}_{\beta}}[Q](z)].
\end{equation} 
The value of $\beta$ specifies the level of risk-aversion. For $\beta = 0$, the superquantile is simply the expectation with the weakest risk aversion (risk neutral), while for $\beta = 1$, it is the essential supremum with the strongest risk aversion (worst case scenario). 

The OUU problem \eqref{eq:ouu} with the CVaR risk measure $\rho_{\text{CVaR},\beta}(Q)$ in \eqref{eq:CVaR} can be equivalently formulated as \cite{RockafellarUryasev00}
\begin{equation}\label{eq:CVaR_ouu}
  \min_{\substack{z \in \cZ_{ad} \\ t \in \bR}} \cJ_{\text{CVaR},\beta}(z,t) := t + \frac{1}{1-\beta} \bE_{\nu_m}[(Q(u(\cdot,z)) - t)^{+}] + \cP(z)
\end{equation}
by introducing an additional optimization variable $t \in \bR$, where $(\cdot)^{+} = \max(\cdot, 0)$. The cost functional \eqref{eq:CVaR_ouu} is non-differentiable because of the maximum function and smooth approximations of the maximum function are often used instead, e.g., \cite{KouriSurowiec16}
\begin{equation}
(x)^{+}_{\epsilon} = 
\begin{cases}
0 & \text{if } x < 0, \\
\left({x^3}/{\epsilon^2} - {x^4}/{2\epsilon^3} \right) & \text{if } 0 < x < \epsilon, \\
x - \epsilon/2 & \text{if } x \geq \epsilon, \\
\end{cases}
\end{equation}
with $\epsilon \ll 1$. 
This approximation has continuous second order derivatives, making the optimization amenable to gradient-based optimizers.
For simplicity, we will assume that $\epsilon$ is a fixed value (e.g. $\epsilon = 10^{-4})$ such that the CVaR approximation is sufficiently accurate. In practice, one may need to solve the OUU problem with successively decreasing values of $\epsilon$ to obtain a more accurate solution.

In this work, we focus on the CVaR risk measure, but note that our framework applies to a wide class of risk measures that can be formulated in terms of expectations of functions of $Q$. This includes moments of $Q$, probability of failure, buffered probability of failure \cite{RockafellarRoyset10}, and so we refer to \cite{ChenRoyset21, ShapiroDentchevaRuszczynski21} for a more extensive exposition of risk measures for optimization under uncertainty.

\subsection{Sample average approximation}
We compute the risk measures via sample average approximation (SAA), i.e. approximating the risk measure $\rho$ by a Monte Carlo estimator $\widehat{\rho}$. For concreteness, in the case of the mean, the estimator is  simply
\begin{equation}\label{eq:saa}
  \widehat{\rho}_{\text{Mean}}(z) := \frac{1}{N}\sum_{i=1}^{N} Q(u(m_i, z)),
\end{equation}
where $m_i \sim \nu_m$ are i.i.d. samples. The SAA optimization problem then becomes
\begin{equation}\label{eq:ouu_saa}
  \min_{z \in \cZ_{ad}} \widehat{\cJ}(z) := \widehat{\rho}_{\text{Mean}}(z) + \cP(z).
\end{equation}
Similarly, for the CVaR risk measure, we use the sample average approximation for $\cJ_{\text{CVaR}, \beta}$ along with the smoothed maximum function to obtain
\begin{equation}\label{eq:CVaR_saa_ouu}
  \min_{\substack{z \in \cZ_{ad} \\ t \in \bR}} \widehat{\cJ}_{\text{CVaR},\beta}(z) := t + \frac{1}{N} \sum_{i=1}^{N} \frac{1}{1-\beta}(Q(u(m_i,z)) - t)_{\epsilon}^{+} + \cP(z).
\end{equation}
Note that when the samples $\{ m_i \}_{i=1}^{N}$ are fixed during the solution of the optimization problem, \eqref{eq:ouu_saa} and \eqref{eq:CVaR_saa_ouu} become deterministic optimization problems, and can be solved using conventional algorithms for deterministic PDE-constrained optimization. 

\subsection{Gradient-based optimization for OUU}

It is well known that for high-dimensional optimization problems, derivative-free methods suffer from very slow convergence, 
requiring numerous function evaluations to obtain a solution. 
On the other hand, gradient-based methods such as Newton and quasi-Newton methods can typically attain asymptotic superlinear convergence rates that are independent of the dimension of the discretized optimization variable. 
Since each optimization iteration involves solving the state PDE for each of the $N$ samples $\{m_i\}_{i=1}^{N}$, it is imperative that the number of optimization iterations is kept small, necessitating the use of derivative-based optimization methods when the $z$ is high-dimensional.

Under the assumption that the risk measures can be formulated as expectations of functions of $Q$, i.e., $\rho(Q) = \mathbb{E}_{\nu_m}[f(Q)]$, and that they are differentiable with respect to $z$, the gradients can be computed by the chain rule
\begin{equation}\label{eq:ouu_risk_gradient}
  D_z \rho(Q) = D_z \mathbb{E}_{\nu_m}[f(Q)] = \mathbb{E}_{\nu_m}\left[D_{Q} f \; \partial_z Q\right],
\end{equation}
where the expectation is replaced by a sum in the case of SAA.
For the computation of $\partial_z Q$, we can write this (interpreted formally in the infinite dimensional setting) as 
\begin{align}\label{eq:ouu_qoi_gradient}
  g^T_z(m,z) &:= \partial_z Q(u(m,z)) = \partial_u Q(u(m,z)) \partial_z u(m,z) \nonumber \\
  &= -\partial_u Q(u(m,z)) \left[\partial_u R(u,m,z)\right]^{-1} \partial_z R(u,m,z) \nonumber \\
  &= p^T \partial_z R(u,m,z),
\end{align}
where we have used $p$ to denote the adjoint variable, which is given by solving the adjoint system
$
  p = -\left[\partial_u R(u,m,z)\right]^{-T}\partial_u Q(u(m,z))^T.
$
Thus, for a given $m$, $z$, and $u(m,z)$, the dominant cost of computing the gradient $g_z(m,z)$ is in solving an additional linear PDE involving adjoint operator $\partial_u R(u,m,z)^{T}$. The gradients can then be used in gradient-based optimization methods to minimize the SAA of the cost functional.



In summary, to estimate the risk measure via SAA, one needs to solve the state PDE for each sample $m_i$, amounting to $N$ state PDE solves. Moreover, computing the gradient of the risk measure with respect to the optimization variable requires an additional $N$ adjoint PDE solves, one for each sample $m_i$. 
Assuming a fixed sample size $N$, the overall cost of the OUU problem then involves $N_{\text{opt}} \times N$ state and $N_{\text{opt}} \times N$  adjoint PDE solves, where $N_{\text{opt}}$ is the number of optimization iterations, assuming a quasi-Newton method and ignoring the solves needed in the line search.




Solving OUU problems thus becomes computationally prohibitive for large $N$ and $N_{\text{opt}}$ when each PDE solve is expensive, which motivates the approximation of the map from the product space of the random parameter field and optimization variables to the PDE state, $(m,z) \mapsto u(m,z)$, by surrogates that are accurate for not only the PDE state but also its derivative with respect to the optimization variables $z$. 
This leads to the development of derivative-informed neural operators in the following section.

\section{Derivative-informed neural operators}\label{section:neural_operators}
Neural operators are neural network approximations of operators as mappings between input and output function spaces.
Let $x \in \mathcal{X}$ denote an input function in the function space $\cX$ 
equipped with a probability measure $\nu_x$, and $y \in \mathcal{Y}$ denote an output function in the function space $\cY$. 
For a given operator mapping $T:\mathcal{X} \rightarrow \mathcal{Y}$, the goal of neural operator learning is to construct an approximation $T_w$ parametrized by neural network parameters or weights $w$ that is optimal in a parametric Bochner space, e.g. $L^2_{\nu_x} = L^2(\mathcal{X},\nu_x ; \mathcal{Y})$.
That is, one seeks a solution to the expected risk minimization problem,
\begin{equation}\label{eq:no_exp_risk_min}
	\min_w \|T_w - T\|^2_{L^2_{\nu_x}} := \int_\mathcal{X}\|T_w(x) - T(x)\|_{\cY}^2 \, d\nu_x(x). 
\end{equation}
Due to the intractability of directly integrating with respect to the measure $\nu_x$, one typically approximates the 
risk minimization problem \eqref{eq:no_exp_risk_min} using finitely many samples of input-output pairs, $\{(x_i, T(x_i))\}_{i=1}^{N_s}$, leading to empirical risk minimization. 
We refer to this as the data-driven approach. 
Alternatively, equivalent objective functions such as norms of the PDE residual or other physics-based objective functions can be used as loss functions in the so-called ``physics-informed'' machine learning approach \cite{RaissiPerdikarisKarniadakis2019,YuLuMengEtAl2022}.
Hybrid approaches combining the data-driven approach with additional physics-informed losses have also been considered \cite{LiZhengKovachkiEtAl2021}. 
However, for simplicity and without loss of generality, we consider only the data-driven approach.

Neural operator construction typically consists of the following challenges:
(1) For solution operators of PDEs, generating input-output pairs requires solving the PDE, which is typically computationally expensive. Thus, one may be able to afford only a limited number of samples, leading to sampling errors in the empirical risk minimization problem. 
(2) The neural network training problem, i.e., the empirical risk minimization problem, is non-convex, and is solved using a nonlinear stochastic optimization method. Global optimization is typically NP-hard and one can only settle for local minimizers. Moreover, the neural operator training problem can be very sensitive to the optimization procedure, initial guesses, and additional factors such as scaling of the data.
(3) Appropriate neural operator architectures are required in order to accommodate a sufficiently rich functional representation.

In recent years many neural operators have been developed that construct effective representations of parametric PDE maps by encoding a priori known mathematical structure of the maps into the architecture. For maps that admit compressible representations in linear bases of the inputs and outputs $\mathcal{X}$ and $\mathcal{Y}$, reduced basis neural operators have been developed \cite{ BhattacharyaHosseiniKovachkiEtAl2021,FrescaManzoni2022, HesthavenUbbiali18, LuXuhuiShengzeEtAl2022,OLeary-RoseberryDuChaudhuriEtAl22,OLeary-RoseberryVillaChenEtAl22}. Other architectures have exploited compact nonlinear representations, such as a Fourier representation \cite{LiKovachkiAzizzadenesheliEtAl2020a}. 
These various neural operators often come with universal approximation theorems \cite{BhattacharyaHosseiniKovachkiEtAl2021,KovachkiLiLiuEtAl2021,LuJinKarniadakis2019,OLeary-RoseberryDuChaudhuriEtAl22}, asserting that mappings in a parametric Bochner space (e.g. $L^2_{\nu_x}$) can be approximated arbitrarily well by finite dimensional neural network representations. Finding suitable representations in practice remains a challenge, and is highly problem-dependent.

\subsection{Use of neural operators for OUU}\label{sec:neural_operator_ouu}
In this work, we consider the use of neural operators to approximate the solution map $u_w(m,z) \approx u(m,z)$ in the OUU problem (i.e. $\cY = \cU$).
Specifically, we require the neural operator to be sufficiently accurate over the input spaces $\cX = \cM \times \cZ$ equipped with a joint probability measure $\nu_x = \nu_m \otimes \nu_z$,
where we have introduced an auxiliary distribution $\nu_z$ from which training data for the control variables $z$ are generated. 
We note that although in certain contexts, it may be sufficient to learn directly the scalar performance function $Q(u(m,z))$, learning the full solution map is a more general approach. Neural operators can be constructed independent of the optimization objective/performance function, allowing for flexibility in its deployment and amortization of construction costs to solve families of optimization problems involving different choices of performance functions.

The selection of $\nu_z$ is problem dependent, but should contain in its support the admissible set $\cZ_{ad}$.
For example, when $z$ has bound constraints, a natural choice for $\nu_z$ is the uniform distribution over the bounds.
The choice of $\nu_z$ can additionally encapsulate prior information about regions of $\cZ$ over which we need the neural operator to be most accurate.
For simplicity, we will use uniform or Gaussian distributions supported over the admissible set,
and defer a study on strategically selecting $\nu_z$ to future work. 

The neural operator then replaces the PDE solution in the sample average approximation of the risk measures. In the case of the expectation, we take 
\begin{equation}\label{eq:neural_operator_risk}
	\bE[Q](z) \approx \frac{1}{N} \sum_{i=1}^{N} Q(u_w(m_i,z)),
\end{equation}
where $m_i \sim \nu_m$. 
Analogous to \eqref{eq:ouu_qoi_gradient}, the gradient of the neural operator based approximation for the performance function can be computed by chain rule, i.e.,
\begin{equation}\label{eq:neural_operator_gradient}
	\partial_z Q(u_w(m,z)) = \partial_u Q(u_w(m,z)) \; \partial_z u_w(m,z),
\end{equation}
where the derivative $\partial_z u_w(m,z)$ can be efficiently computed by automatic differentiation. 
The evaluation of the neural operator $u_w(m,z)$ and its derivative $\partial_z u_w(m,z)$ can typically be orders of magnitdue faster than solving the corresponding PDEs, allowing us to use a large sample size to solve the OUU problem with SAA and gradient-based optimization methods.
This can effectively eliminate sampling error in SAA, albeit at the cost of introducing a bias due to the approximation error of the neural operator.
Thus, the effectiveness of using the neural operator to solve OUU problems depends on the trade-off between this sampling error and the approximation error, which will be investigated in detail in the numerical experiments in Section \ref{section:numerical_results}.

Additionally, the gradient of the performance function $\partial_z Q$ with respect to the optimization variable $z$ plays an important role in the OUU problem. In particular, the optimal solution $z^*$ is characterized by the first-order necessary condition
$ D_z \cJ(z^*)= 0$. 
Here, the gradient of the cost functional $\cJ$ involves the gradient of the performance function, $\partial_z Q(u(m,z))$, as illustrated in \eqref{eq:ouu_risk_gradient}. 
Poor approximation of the gradient by the neural operator $\partial_z Q(u_w(m,z))$ leads to spurious local minimizers of $\cJ(z)$, resulting in inaccurate optimal solutions. 
The gradient error due to the neural operator approximation can be bounded in terms of the approximation error of the neural operator and its derivative as follows.
\begin{proposition}\label{prop:gradient_error}
	Assume that the operator $u(m,z)$ is differentiable and the performance 
	$Q : \cU \rightarrow \bR$ is Lipschitz continuously differentiable with constant $L_Q^1$. Then, at a given $(m,z)$, 
	\begin{equation}\label{eq:gradient_error}
		\|\partial_z Q (u) -\partial_z Q(u_w)\|_{\mathcal{Z}'}
		\leq L_Q^{1} \left\| \partial_z u \right\|_{\cL(\cZ, \cU)}  \|u - u_w\|_{\cU} + 
		 \|\partial_u Q(u_w)\|_{\cU'} \| \partial_z u - \partial_z u_w \|_{\cL(\cZ, \cU)}
	\end{equation}
	where $\| \cdot \|_{\cL(\cZ, \cU)}$ denotes the operator norm of bounded linear operators from $\cZ$ to $\cU$.
	\end{proposition}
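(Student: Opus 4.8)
The plan is to reduce the claim to a single telescoping decomposition of the difference of gradients, followed by the triangle inequality and the submultiplicativity of the operator norm. First I would recall the chain-rule expressions already derived in \eqref{eq:ouu_qoi_gradient} and \eqref{eq:neural_operator_gradient}, namely $\partial_z Q(u) = \partial_u Q(u)\,\partial_z u$ and $\partial_z Q(u_w) = \partial_u Q(u_w)\,\partial_z u_w$, where $\partial_u Q(\cdot) \in \cU'$ is a bounded linear functional and $\partial_z u,\ \partial_z u_w \in \cL(\cZ,\cU)$, so that each product is the composition of a bounded operator with a functional and hence an element of $\cZ'$. Establishing that these two objects live in $\cZ'$ and that their norms behave submultiplicatively under composition is the only structural point to settle in the infinite-dimensional setting.

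Second, I would insert the cross term $\partial_u Q(u_w)\,\partial_z u$ to split the error into a ``functional mismatch'' piece and a ``derivative mismatch'' piece,
\begin{equation}
\partial_z Q(u) - \partial_z Q(u_w) = \big(\partial_u Q(u) - \partial_u Q(u_w)\big)\,\partial_z u + \partial_u Q(u_w)\,\big(\partial_z u - \partial_z u_w\big).
\end{equation}
Taking the $\cZ'$-norm and applying the triangle inequality leaves two terms, each of which is handled by the bound $\|\phi\, A\|_{\cZ'} \le \|\phi\|_{\cU'}\,\|A\|_{\cL(\cZ,\cU)}$ valid for $\phi \in \cU'$ and $A \in \cL(\cZ,\cU)$.

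Third, I would finish each term separately. For the first summand, submultiplicativity gives the factor $\|\partial_z u\|_{\cL(\cZ,\cU)}$, and the Lipschitz continuity of the derivative $\partial_u Q$ with constant $L_Q^1$ supplies $\|\partial_u Q(u) - \partial_u Q(u_w)\|_{\cU'} \le L_Q^1 \|u - u_w\|_{\cU}$, producing exactly the first term of \eqref{eq:gradient_error}. For the second summand, submultiplicativity directly yields $\|\partial_u Q(u_w)\|_{\cU'}\,\|\partial_z u - \partial_z u_w\|_{\cL(\cZ,\cU)}$, which is the second term of \eqref{eq:gradient_error}. Summing the two bounds completes the argument.

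I do not anticipate a genuine obstacle here: the proof is elementary once the decomposition is chosen, and the Lipschitz-differentiability hypothesis on $Q$ is precisely what is needed to control the functional mismatch. The only real design decision is whether to insert $\partial_u Q(u_w)\,\partial_z u$ or instead $\partial_u Q(u)\,\partial_z u_w$; the former is what produces the stated asymmetric bound, with $\|\partial_z u\|_{\cL(\cZ,\cU)}$ multiplying the state error $\|u-u_w\|_{\cU}$ and $\|\partial_u Q(u_w)\|_{\cU'}$ multiplying the Jacobian error. The remaining care is purely bookkeeping in the dual and operator-norm framework, ensuring the compositions are read as maps $\cZ \to \bR$ so that submultiplicativity legitimately applies.
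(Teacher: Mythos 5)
Your proposal is correct and follows essentially the same route as the paper's proof: apply the chain rule to both $\partial_z Q(u)$ and $\partial_z Q(u_w)$, insert the cross term $\partial_u Q(u_w)\,\partial_z u$, and conclude with the triangle inequality, the bound $\|\phi\,A\|_{\cZ'} \le \|\phi\|_{\cU'}\|A\|_{\cL(\cZ,\cU)}$, and the Lipschitz continuity of $\partial_u Q$. The only difference is presentational: you make the submultiplicativity step and the dual-space bookkeeping explicit, whereas the paper states the same chain of inequalities more tersely.
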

	\begin{proof}
	The bound follows from a triangle inequality,
	\begin{align}
		\|\partial_z Q(u) &-\partial_z Q(u_w)\|_{\mathcal{Z}'}
		= \left\|\partial_u Q(u) \partial_z u - \partial_u Q(u_w) \partial_z u_w\right\|_{\cZ'} \nonumber \\
		&\leq \left\|(\partial_u Q(u)  - \partial_u Q(u_w)) \partial_z u\right\|_{\cZ'}
		+ \left\|\partial_u Q(u_w) (\partial_z u - \partial_z u_w)\right\|_{\cZ'} \nonumber \\
		&\leq L_Q^{1} \left\|\partial_z u\right\|_{\cL(\cZ, \cU)} \|u  - u_w\|_{\cU}
		+ \|\partial_u Q(u_w)\|_{\cU'} \left\|\partial_z u - \partial_z u_w\right\|_{\cL(\cZ, \cU)} \nonumber
	\end{align}
\end{proof}
This suggests that accuracy of both the solution operator $u(m,z)$ and its Jacobian $\partial_z u(m,z)$ are needed in order to guarantee accuracy of the gradient $\partial_z Q$. 

In light of the aforementioned points, we focus on the following interrelated challenges in deploying neural operators for the task of solving OUU problems subject to high-dimensional uncertainty. First, due to the large computational costs of forward simulations, as well as the high-dimensionality of the random parameter field $m$ and state $u$, one is faced with the task of learning complex high-dimensional operators from limited samples. 
Second, as the neural operator is to be deployed in the solution of an optimization problem, we are concerned not just with the operator approximation accuracy, but also the derivatives of the operator with respect to the optimization variable $z$.
It remains to establish whether or not sufficiently accurate neural operators can be constructed for OUU in a cost effective manner when compared to solving the OUU problem directly with the PDE.

\subsection{Derivative-informed neural operators}
In order to accurately and efficiently solve OUU problems it is important that neural operator errors do not lead to inaccurate approximations of the risk measure and its gradient. 
To this end, we propose to train the neural network approximation on not only evaluations of the solution operator, but also its derivative with respect to the optimization variable $z$. 
Until recently, parametric derivative training was not addressed in neural operator learning. The work of \cite{OLearyRoseberryChenVillaEtAl22} investigates the feasibility of constructing derivative-informed neural operators (DINOs), where neural operators are trained on both the operator value and its derivative with respect to the input variable.
That is, the operator regression task is performed in $H^1_{\nu_x}$ instead of $L^2_{\nu_x}$ as in \eqref{eq:no_exp_risk_min}, i.e.,
\begin{equation} \label{eq:generic_dino_exp_risk_min}
	\min_w 
		\|T_w - T\|^2_{H^1_{\nu_x}} := \int_\mathcal{X}\|T_w(x) - T(x)\|_{\cY}^2 + \underbrace{\|D_x T_w(x) - D_x T(x)\|_{\text{HS}(\mathcal{X},\mathcal{Y})}^2}_\text{Jacobian error term} d\nu_x(x)
	,
\end{equation}
where $\|A\|_{\HS(\cX, \cY)}$ denotes the Hilbert--Schmidt norm of a linear operator $A \in \cL(\cX, \cY)$ defined using an orthonormal basis $e_i$ of $\cX$ as
$\|A\|_{\HS(\cX, \cY)}^2 := \sum_i \langle A e_i, A e_i \rangle_{\cY}^2.$

In the OUU setting, we consider the following derivative-informed neural operator training 
motivated by the error expression in \eqref{eq:gradient_error}, 
\begin{align} \label{eq:ouu_dino_exp_risk_min}
	\min_w  
	\|u_w - u\|^2_{H^{(0,1)}_{\nu_m\otimes \nu_z}} := & \int_{\mathcal{M}\times \mathcal{Z}}\|u_w(m,z) - u(m,z)\|_{\cU}^2 \\
	 + &\underbrace{\|\partial_z u_w(m,z) - \partial_z u(m,z)\|_{\text{HS}(\mathcal{Z},\mathcal{U})}^2}_{\text{control Jacobian error term}} d\nu_m\otimes \nu_z(m,z) \nonumber
	 ,
\end{align}
where we refer to the derivative of the state with respect to the control variables, $\partial_z u$, as the control Jacobian. 
This objective is ostensibly intractable in comparison to the $L^p_{\nu_x}$ learning problem \eqref{eq:no_exp_risk_min}, due to the computational costs of evaluating $\partial_z u$ for training data, and the large online memory and arithmetic costs associated with computing and differentiating through the Jacobian error term. 
However, we show that the data generation and training costs may be significantly reduced using reduced basis architectures, which we introduce in Section \ref{sec:reduced_basis_architecture}.


Besides improving accuracy of the Jacobian, it is numerically shown in \cite{OLearyRoseberryChenVillaEtAl22} that the Jacobian training also improves the generalization accuracy of the neural operator output itself. 
Since the optimal solution $z^*$ is unlikely to coincide with any of the training samples, generalization accuracy of the neural operator within the admissible space $\cZ_{\ad}$ is important to obtaining accurate OUU solutions.
In this regard, the Jacobian $\partial_z u(m, z)$ contains additional information about the dependence of $u$ on $z$, which can be particularly valuable in preventing overfitting of the neural network when the training sample size is small. Often, this information can be obtained at little additional cost, as discussed in Section \ref{sec:efficient_jacobian_training}.


\subsection{Reduced basis architectures}\label{sec:reduced_basis_architecture}

In order to address the high dimensionality of the input and output spaces, 
we propose the use of reduced basis neural operator architectures that exploit the intrinsic low dimensionality of the map 
$m,z \mapsto u(m,z)$.
In this framework, neural networks are used to approximate the mapping between reduced basis representations of the input and output spaces.
Since the construction of the neural network depends only on the intrinsic dimensionality of the solution mapping, 
reduced basis architectures have the capability to learn complex high-dimensional PDE-based maps in an efficient and dimension independent manner when intrinsic dimensionality is low.

For the OUU problem,
we assume that the spaces $\cM$ and $\cU$ have dimensions $d_M$ and $d_U$, arising from the discretization of infinite dimensional function spaces.
In particular, $d_M$ and $d_U$ may be arbitrarily large as the mesh resolution increases. 
On the other hand, we consider the space $\cZ$ to be inherently finite dimensional, with dimension $d_Z \ll d_U, d_M$. 
This is typical of many optimization problems in engineering, since design/control choices are often finite-dimensional by nature. 
Therefore, we consider reduced basis representations of $\cM$ and $\cU$, but do not employ dimension reduction on $\cZ$. Despite this, our method extends to the case where $\cZ$ is infinite dimensional by applying similar dimension reduction techniques to $\cZ$. 

We can write the proposed reduced basis neural operator as 
\begin{equation}\label{eq:rbnet}
	u_w(m,z) = \Phi_{r_U}\varphi_w(m_r,z) +b,  \qquad m_r = \Psi_{r_M}^T m,
\end{equation}
where $\Psi_{r_M} \in \bR^{d_M \times r_M}$ and $\Phi_{r_U} \in \bR^{d_U \times r_U}$ are reduced bases for the parameter and state spaces $\cM$ and $\cU$, respectively.
The mapping $\varphi_w:\mathbb{R}^{r_M \times d_Z} \rightarrow \mathbb{R}^{r_U}$ is a reduced basis neural network parametrized by weights $w \in \mathbb{R}^{d_W}$, and $b \in \mathbb{R}^{r_U}$ is a bias term.
We will refer to this architecture as the multi-input reduced basis neural operator (MR-NO) and use the name MR-DINO when the architecture is trained with the derivative-informed loss \eqref{eq:ouu_dino_exp_risk_min}.
A schematic for MR-DINO is shown in Figure \ref{fig:control_dino}. 

\begin{figure}[!htb]
	\center
	\begin{tikzpicture}[scale = 0.8, transform shape, every node/.style={draw,outer sep=0pt,thick}]
	\node[bag] (Input1) at (0,0.5) [minimum width=1cm,minimum height=3cm] {Uncertain parameter \\ reduced basis \\Eigenvectors of $\cC$ \\ $\mathbb{R}^{d_M} \rightarrow \mathbb{R}^{r_M}$};
	\node[bag] (Input2) at (0,-2.5) [minimum width=2cm,minimum height=2cm] {Optimization\\ variable\\ $z \in \mathbb{R}^{d_Z}$};
	
	\node[bag] (NN) at (5,0) [minimum width=3cm,minimum height=1.5cm] {Neural Network:\\ $\mathbb{R}^{r_M}\times \mathbb{R}^{d_Z}\rightarrow \mathbb{R}^{r_U}$};
	\draw (Input1.north east) -- (NN.north west);
	\draw (Input1.south east) -- ($(NN.north west) - (0,1.0)$);
	\draw (Input2.north east) -- ($(NN.north west) - (0,1.0)$);
	\draw (Input2.south east) -- (NN.south west);
	
	\node[bag] (Output) at (10,0)[minimum width=1cm,minimum height=3cm] {State \\ reduced basis\\ $\mathbb{E}_{\nu_m\otimes \nu_z}[uu^T]$ \\ $\mathbb{R}^{r_U} \rightarrow \mathbb{R}^{d_U}$};
	\draw (NN.north east) -- (Output.north west) (NN.south east) -- (Output.south west);
	
	\node[bag] (Explanation) at (7,-2.75) [minimum width=3cm,minimum height=1.5cm] {Derivative-Informed Neural Operator\\$\min_w\mathbb{E}_{\nu_m \otimes \nu_z}[\|u_w - u\|_2^2 + \|\partial_z u_w - \partial_z u \|_{\mathrm{HS}}^2]$};
	\end{tikzpicture}
	\caption{Schematic for the MR-DINO in the solution of OUU problems.}
	\label{fig:control_dino}
	\vskip -0.5cm
\end{figure}
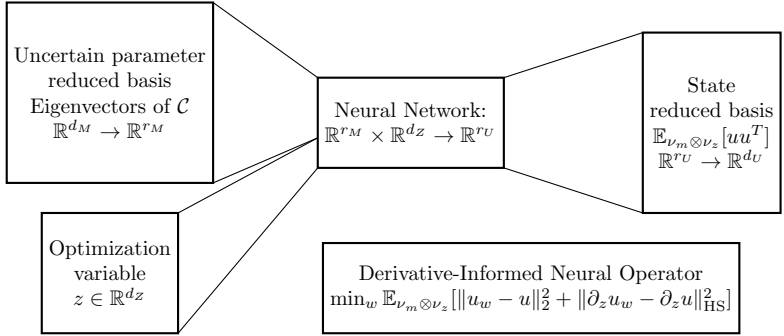

We note that for certain classes of performance functions, such as linear or quadratic forms, its evaluation at the neural operator output, i.e. $Q(u_w)$, can be further accelerated using the reduced basis representation of $u_w$. 
For example, in the quadratic case with $Q(u) = u^T W u$, $W \in \bR^{d_U \times d_U}$, and $b = 0$ for simplicity, we have
\begin{equation} Q(u_w) = u_w^T W u_w = \varphi_w^T \Phi_{r_U}^T W \Phi_{r_U} \varphi_w 
	= \varphi_w^T W_{r_U} \varphi_w, 
\end{equation}
where the reduced matrix $W_{r_U} := \Phi_{r_U}^{T} W \Phi_{r_U} \in \mathbb{R}^{r_U \times r_U}$ can be precomputed so that the costs of evaluating $Q(u_w)$ scale only with the rank $r_U$. 
This applies to many commonly used optimization objectives, such as $L^2(\Omega)$ norms and data misfits.

In this work, we obtain reduced bases by proper orthogonal decomposition (POD) for the state, and principal component analysis (PCA) for the uncertain parameter. This architecture is an extension of the PCANet \cite{BhattacharyaHosseiniKovachkiEtAl2021} to multiple inputs. 
The POD basis is computed by solving the eigenvalue problem
\begin{equation}
	\bE_{\nu_x}[(u - \bar{u})  (u - \bar{u})^T] \phi_i = \lambda_{u}^{(i)} \phi_i, 
\end{equation}
where $\bar{u} = \bE_{\nu_x}[u]$. 
The reduced basis is then taken as the $r_U$ eigenvectors $\Phi_{r_U} = [\phi_1, \dots \phi_{r_U}]$ corresponding to the $r_U$ largest eigenvalues, and the bias is taken as the mean $b = \bar{u}$.
In practice, the eigenvectors are computed by SVD of data matrix $U = [u_1, \dots u_N]$, where $\{u_i\}_{i=1}^{N}$ are snapshots of the solution from training data.
Moreover, when $u \in \mathbb{R}^{d_U}$ represents the coefficients of a finite element discretization, 
the inner product of $\cU$ becomes a weighted inner product $\langle u_1, u_2 \rangle_{M_u} := u_1^T M_u u_2$, 
where the symmetric positive definite matrix $M_u$ arises from the discretization of the underlying function space inner product. 
For $L^2(\Omega)$, $M_u$ is simply the mass matrix.
In this case, to maintain consistency with the infinite dimensional setting, we consider a weighted POD in which the SVD is carried out on $M_u^{1/2} U$, 
such that the resulting basis is orthonormal in the $\langle u_1, u_2 \rangle_{M_u}$ inner product.

Reduced basis networks based on POD have become very popular in neural operator learning 
\cite{BhattacharyaHosseiniKovachkiEtAl2021,FrescaManzoni2022,LuXuhuiShengzeEtAl2022,OLeary-RoseberryDuChaudhuriEtAl22,OLeary-RoseberryVillaChenEtAl22}. 
Approximation errors based on the POD truncation can be proven via the Hilbert--Schmidt Theorem or Fan's Theorem \cite{BhattacharyaHosseiniKovachkiEtAl2021,ManzoniNegriQuarteroni16,OLeary-RoseberryDuChaudhuriEtAl22}, with an upper bound by the sum of the trailing eigenvalues $\{\lambda_u^{(i)}\}_{i>r_M}$. More general a-priori reduced basis error analyses can be found in \cite{BinevCohenDahmenEtAl11} based on their comparison to Kolmogorov $n$-width for optimal linear approximations and for specific parametric problems with explicit exponential rates in \cite{ ChenQuarteroni14,MadayPateraTurinici02} and algebraic rates in \cite{ChenSchwab15, ChenSchwab16}, see review in \cite{ChenQuarteroniRozza17, ChenSchwab16c, CohenDeVore15, OhlbergerRave15}.

The basis for the random parameter field is analogously computed using the PCA. 
This uses the dominant $r_M$ eigenvectors of the covariance of $\nu_m$ as the reduced basis,
$\cC \psi_i = \lambda_{m}^{(i)} \psi_i,$
giving rise to the rank $r_M$ basis $\Psi_{r_M} = [\psi_1, \dots, \psi_{r_M}]$. 
In the case of a Gaussian random field, this simply corresponds to the Karhunen--Lo\`eve expansion (KLE) of the random field.
The KLE basis works well in cases where the most sensitive modes of the solution operator align well with the dominant modes of the covariance operator.
When this is not the case, low-dimensional bases that directly capture sensitivity of the solution operator $u$ with respect to the random parameter field $m$ can be identified using an active subspace approach \cite{OLeary-RoseberryVillaChenEtAl22}.
For simplicity, we will use only the KLE basis and note that our proposed approach works analogously with the active subspace basis, which may be an appealing option for problems where this $m$-sensitivity is informative.

\subsection{Efficient Jacobian training for reduced basis architectures} \label{sec:efficient_jacobian_training}
To efficiently perform Jacobian training, we use the fact that the range of the control Jacobian of the reduced basis neural operators, $\partial_z u_w$, is precisely the span of the output reduced basis. Therefore, for the reduced basis neural operator presented in \eqref{eq:rbnet}, the minimization of Jacobian error can be re-written in the reduced output space as
\begin{align}\label{eq:reduced_basis_derivative_equivalence}
	\min_w &
	\| u_w(m,z) - u(m,z) \|_{\cU}^2 + 
	\|\partial_z u_w(m,z) - \partial_z u(m,z)\|_{\text{HS}(\mathcal{Z},\mathcal{U})}^2 \\
	\Leftrightarrow & \min_w 
	\| \varphi_w(m_r,z) + \Phi_{r_U}^T M_u (b - u(m,z)) \|_{\ell^2}^2 \nonumber 
	+ \| \partial_z \varphi_w(m,z) - \Phi_{r_U}^T M_u \partial_z u(m,u) \|_F^2, \nonumber
\end{align}
due to Theorem 1 in \cite{OLearyRoseberryChenVillaEtAl22}, where $\| \cdot \|_F^2$ is the Frobenius norm. 
We refer to the term $\Phi_{r_U}^T M_u \partial_z u \in \mathbb{R}^{r_U \times d_Z}$ 
as the reduced control Jacobian of $u$.
Thus, the loss term in \eqref{eq:reduced_basis_derivative_equivalence} allows the online memory and arithmetic costs for training our reduced basis neural operators to scale only with $d_Z \times r_U$ instead of $d_Z \times d_U$. Morever, the reduced control Jacobian $\Phi_{r_U}^{T} M_u \partial_z u$ can be efficiently computed after computing the PDE solution $u(m,z)$ at relatively little additional cost.
Recall that by the implicit function theorem, we have 
\begin{equation} \label{eq:projected_sensitivity}
	\Phi_{r_U}^{T} M_u \partial_z u(m,z) = - \Phi_{r_U}^T M_u \left[\partial_u R(u,m,z) \right]^{-1} \partial_z R(u,m,z),
\end{equation}
where $\partial_z R$ is of size $d_U \times d_Z$. 
When $d_Z < r_U$, we solve the linearized state PDE, $\left[ \partial_u R(u,m,z) \right]^{-1} \partial_z R(u,m,z)$ for each of the $d_Z$ columns of $\partial_z R$, On the other hand, if $d_Z \geq r_U$, we instead solve the adjoint PDE, $[\partial_u R(u,m,z)]^{-T} M_u \Phi_{r_U}$ for each of the $r_U$ basis vectors $\Phi_{r_U}$. 
In any case, given the state $u(m,z)$, the additional computational costs of computing the Jacobian training data are associated with the $\min(d_Z, r_U)$ linearized PDE solves, all with the same linearized operator, $\partial_u R(u,m,z)$, or its adjoint.

For steady-state PDEs, which is the focus of this work, the costs to compute the Jacobian training data can be mitigated as follows. 
When the state PDE is linear, the linearized PDE has the same linear operator as the state PDE. 
Hence, when one uses a direct solver, the linearized PDE solves for the Jacobian computation can reuse the same triangular factors of the state PDE solution, which require only the back substitution step of the solve and become negligible in cost relative to the state PDE solve. 
For linear problems requiring iterative methods, the costs of the preconditioner construction can be amortized across the linearized PDE solves, since they use the same preconditioner as the linear state PDE solve.
When the state PDE is nonlinear, multiple linearized state PDEs need to be solved in order to arrive at a solution, e.g. via Picard or Newton iterations.
The linearized PDE solves involved in the Jacobian computation therefore cost only a fraction of the state PDE solve. We will demonstrate the cost of the Jacobian computation relative to that of the state PDE solve in the numerical experiments in Section \ref{section:numerical_results}.

\section{Numerical experiments}\label{section:numerical_results}

In this section, we demonstrate the accuracy and efficiency of our method using three PDE-constrained OUU problems; the optimal source control of a semilinear elliptic PDE in 2D with an uncertain diffusion coefficient field, and the optimal boundary control of fluid flow governed by Navier--Stokes equations in both 2D and 3D under an uncertain inflow velocity field. We present a detailed comparison of the accuracy and cost of our method against the ground truth solutions for the 2D cases which are still affordable, and demonstrate the power of our method to the 3D flow control problem where the ground truth solution is prohibitively expensive to compute.

For the numerical results, we use FEniCS \cite{LoggMardalGarth12} to implement the finite element discretizations with direct solvers from PETSc \cite{BalayAbhyankarAdamsEtAl23}.
Data generation is conducted with the use of hIPPYlib \cite{VillaPetraGhattas21} and hIPPYflow \cite{hippyflow}. Neural network approximations are implemented using TensorFlow \cite{tensorflow2015-whitepaper}. Unless otherwise specified, timings are carried out on a single compute node with an Intel Xeon Gold 6248R processor and NVIDIA A100 40GB GPU for the neural network computations.


\subsection{Cost-accuracy comparison of the neural operator}\label{sec:cost_accuracy_comparison}


We train neural operators using varying training data sizes, both with (MR-DINO) and without Jacobian loss (MR-NO). 
The neural operators are then used to solve the OUU problem by SAA of the cost functional using a large sample size. 
We denote optimal solutions computed by the neural operator by $z^*_{\NN}$. 
For comparison, we solve the OUU problems using the PDE by SAA with different sample sizes, representing PDE-constrained OUU under different computational budgets. 
We denote the optimal solutions computed using the PDE by $z^*_{\PDE}$. 

The accuracies of the approximations are evaluated with respect to the reference solutions, which are obtained by solving the PDE-based OUU problem with a large sample size for SAA. This reference optimal solution is denoted as $z^*_{\mathrm{ref}}$. The performance of the approximate optimal solutions $z_{\mathrm{approx}}^*$ are compared to $z^*_{\mathrm{ref}}$ using 
\begin{equation}
  \textrm{relative optimal cost error } := \frac{|\cJ(z_{\mathrm{approx}}^*) - \cJ(z^*_{\mathrm{ref}})|}{|J(z^*_{\mathrm{ref}})|}.
\end{equation}

We compare the computational cost in terms of the total number of state PDE solves used to obtain the optimal solution, which is the dominant cost of the optimization process. 
For the neural operator-based optimization, this corresponds to the total number of training samples. For the PDE-based optimization, 
it is given by the number of optimization iterations times the sample size for SAA.

Additionally in Section \ref{section:comparison_of_timings}, we present the computational costs of the state PDE solves in comparison to the costs of the linearized PDE solves required to compute Jacobian 
(vector products) used in both the PDE-based gradient computation and neural operator Jacobian training data generation using \cite{hippyflow}.
These costs are also compared to the neural operator training and evaluation costs. 


\subsection{Optimal source control of a semilinear elliptic PDE} \label{section:semilinear_elliptic}
We first consider the source control of a semilinear elliptic PDE over a square domain, $\Omega = (0, 1)^2$, with a log-normal parameter field. The PDE is given by 
\begin{equation}\label{eq:elliptic_pde}
  -\nabla \cdot ( e^{m} \nabla u) + r u^3 = \sum_{i=1}^{49} z_i f_i, 
\end{equation}
with homogeneous Dirichlet boundary conditions. The coefficient $r = 0.1$ is a constant reaction coefficient. The log permeability $m \sim \nu_{m} = \cN(\bar{m}, \cC)$ is a Gaussian random field for which we specify $\bar{m} = -1$ and $\cC = (-\gamma \Delta + \delta I)^{-2}$ with $\gamma = 0.1, \delta = 5.0$ for the distribution of $m$. 
The control variables $z = \{z_i\}_{i=1}^{49}$, for which we consider box bounds $\cZ_{\ad} = [-4, 4]^{49}$,
define the strengths of invidiual sources
$\{f_i\}_{i=1}^{49}$ in a $7 \times 7$ grid of localized Gaussian sources, with $f_i$ being given by 
\begin{equation}
  f_i(x) = \frac{1}{\sigma \sqrt{2\pi}}\exp\left( 
    -\frac{|x - x_i|^2}{2 \sigma^2}
  \right),
\end{equation}
where $x_i$ is the position of the source and $\sigma = 0.08$ is a width parameter.
The PDE is discretized in a $64 \times 64$ uniform triangular mesh using piecewise linear elements for both the random parameter and state spaces, leading to $d_U = d_M = 4225$. The performance function $Q$ is of the following tracking type,
\begin{equation}
  Q(u) := \int_{\Omega} (u - u_{\text{target}})^2 dx,
\end{equation} 
where $u_{\text{target}}$ is the target state for the system. 
This allows us to define an optimal control problem that minimizes the CVaR of the tracking objective, i.e.,
\begin{equation}
  \min_{z \in \cZ_{\ad}} \CVaR_{\alpha}[Q](z)
  \qquad \st \text{\eqref{eq:elliptic_pde}}.
\end{equation}
Note that we solve the CVaR optimization problem using the reformulation \eqref{eq:CVaR_ouu}. 
As examples, we consider a sinusoidal target state $u_{\mathrm{target}} = \sin(2 \pi x_1) \sin(2 \pi x_2)$, and a quadratic target state $u_{\mathrm{target}} = 4 x_2 (1 - x_2)$.


\subsubsection{Solution by neural operator}

We generate the training data for the neural operator by sampling from the input distribution $\nu_{m} \otimes \nu_{z}$, where we use the auxiliary distribution $\nu_{z} = \text{Uniform}(-4, 4)^{49}$ for the control variables. For the reduced basis generation, we use $512$ samples to compute the POD basis for the output and use the KLE basis for the input parameter, which are given as the discrete eigenfunctions of the covariance operator of $\nu_m$.

We train neural networks of two different architectural sizes. One is a small network with input rank 
$r_M = 50$ for the input basis and output rank $r_U = 100$, connected by a dense neural network with two hidden layers of width $200$ that approximates the mapping between the reduced basis coefficients. We also consider a larger network with $r_M = 100$ and $r_U = 300$, and two hidden layers of width $400$. All networks use softmax activation functions. 
We train neural networks using training data sets of size 512, 1,024, 2,048, and 4,096, both with and without Jacobian training. 
The neural networks are trained using Adam for 1,600 epochs with an initial learning rate of $10^{-3}$ that is reduced to $2.5 \times 10^{-4}$ after 800 epochs.
In Figure \ref{fig:poisson_training_errors}, we plot the mean relative $L^2(\Omega)$ errors of the neural operator and the relative errors of its Jacobian measured in the Hilbert--Schmidt norm. The errors are computed on a test set of 1,024 samples from the input distribution, and averaged over 10 different runs of the training process with different initializations. The incorporation of Jacobian training consistently leads to smaller errors in both the solution and its Jacobian. These improved approximations give rise to smaller bounds for the control gradient errors as discussed in Proposition \ref{prop:gradient_error}. 
Interestingly, we observe that without Jacobian training, the larger architecture performs worse than the smaller architecture as a consequence of overfitting. With Jacobian training, the small architecture saturates in accuracy, while the larger architecture yields lower errors and continues to improve. This suggests that the additional Jacobian training data allows one to use larger and more expressive architectures while mitigating the tendency of overfitting.


\begin{figure}
  \centering
  \begin{subfigure}{0.45\textwidth}
    \centering
    \includegraphics[width=\textwidth, trim=30 20 0 10]{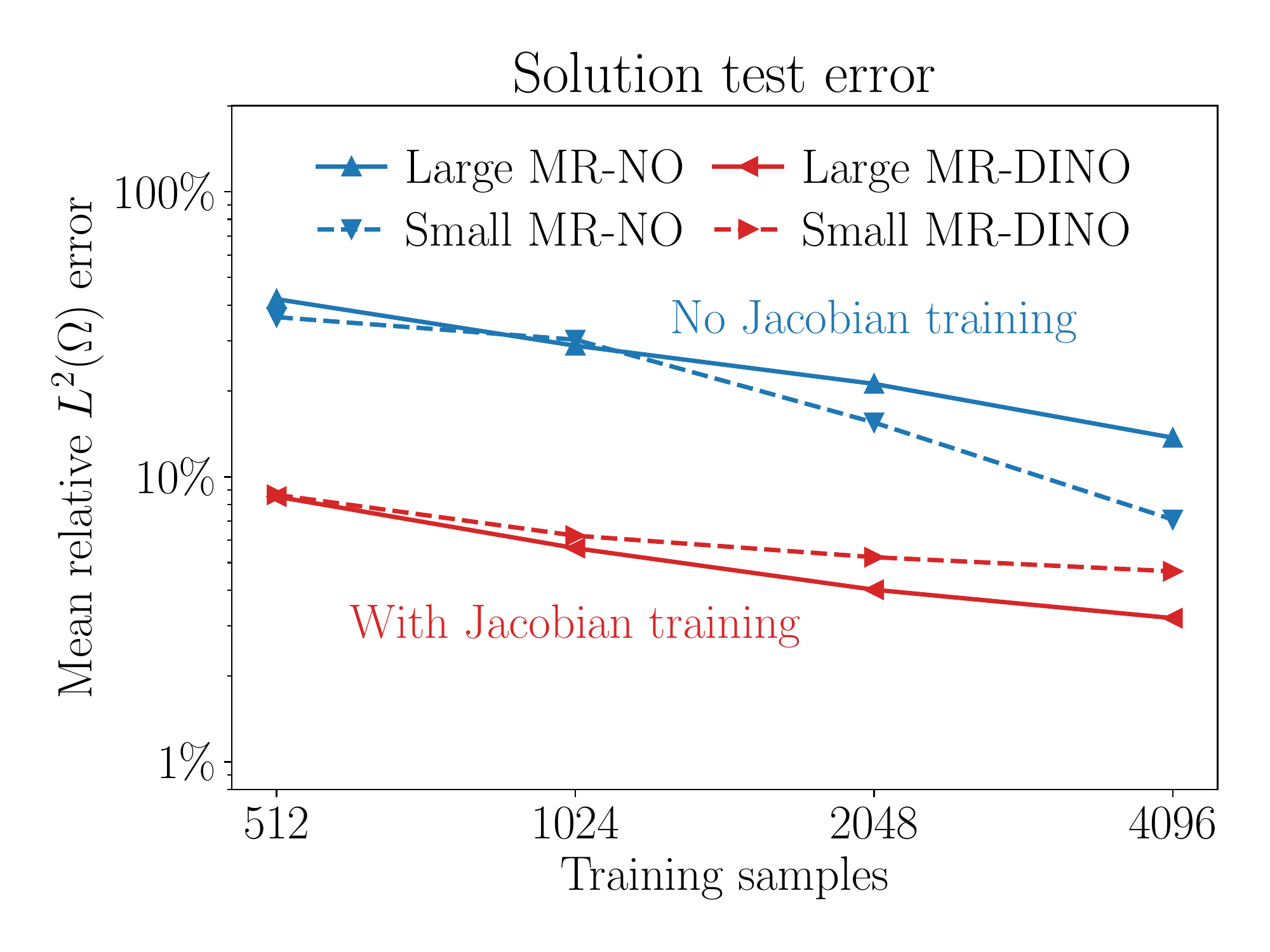}
  \end{subfigure}
  \begin{subfigure}{0.45\textwidth}
    \centering
    \includegraphics[width=\textwidth, trim=30 20 0 10]{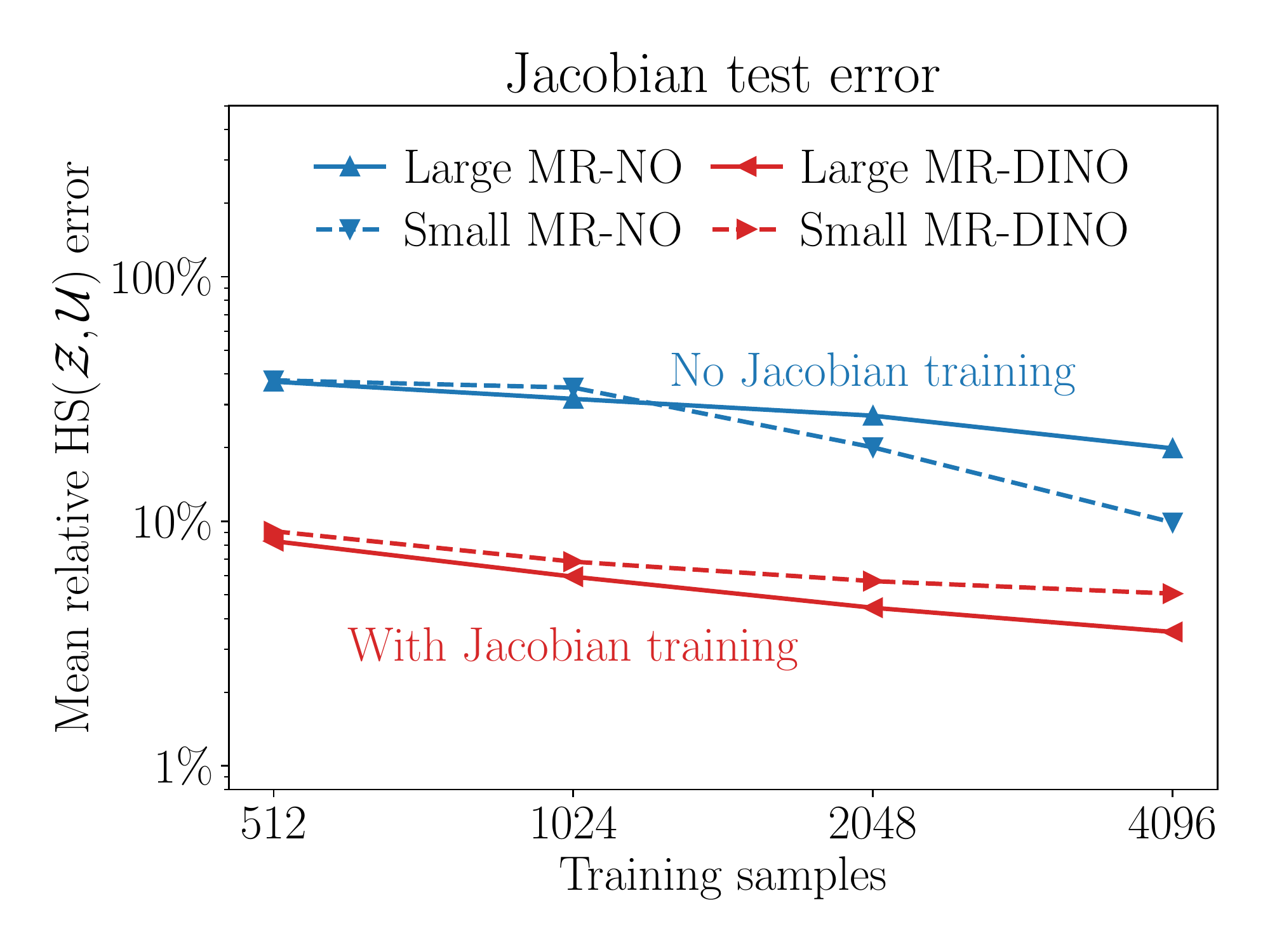}
  \end{subfigure}
  \caption{State ($L^2(\Omega)$) (left) and Jacobian ($\mathrm{HS}(\cZ, \cU)$) (right) testing errors of elliptic PDE neural operators trained with (MR-DINO) and without Jacobian training (MR-NO). 
  }
  \label{fig:poisson_training_errors}
  \vskip -0.5cm
\end{figure}


\begin{figure}[!htb]
  \centering 
  \begin{minipage}{0.3\textwidth}
    \begin{subfigure}{\linewidth}
      \includegraphics[width=\linewidth, trim=0 40 0 40]{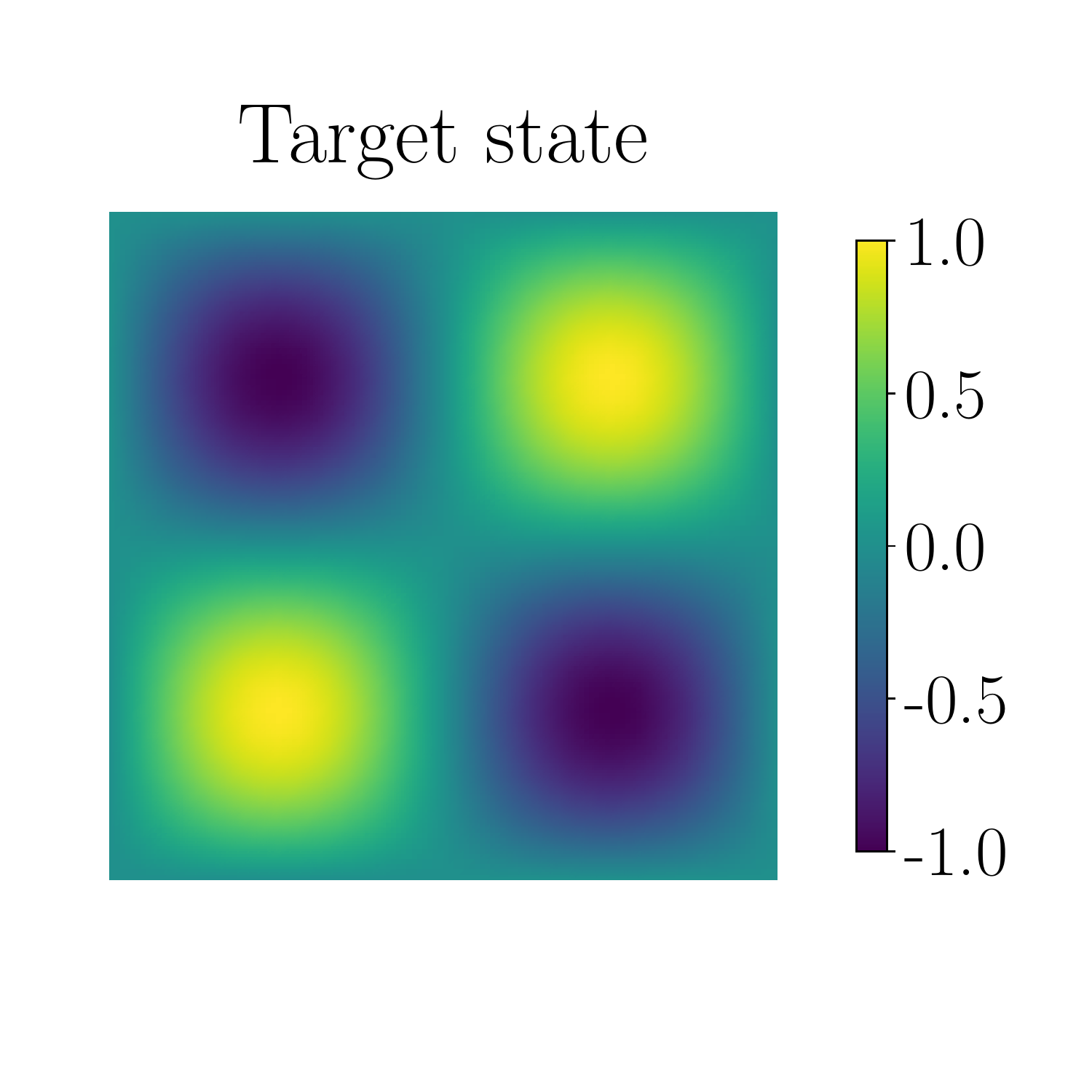}
    \end{subfigure}
    \begin{subfigure}{\linewidth}
      \includegraphics[width=\linewidth, trim=0 40 0 40]{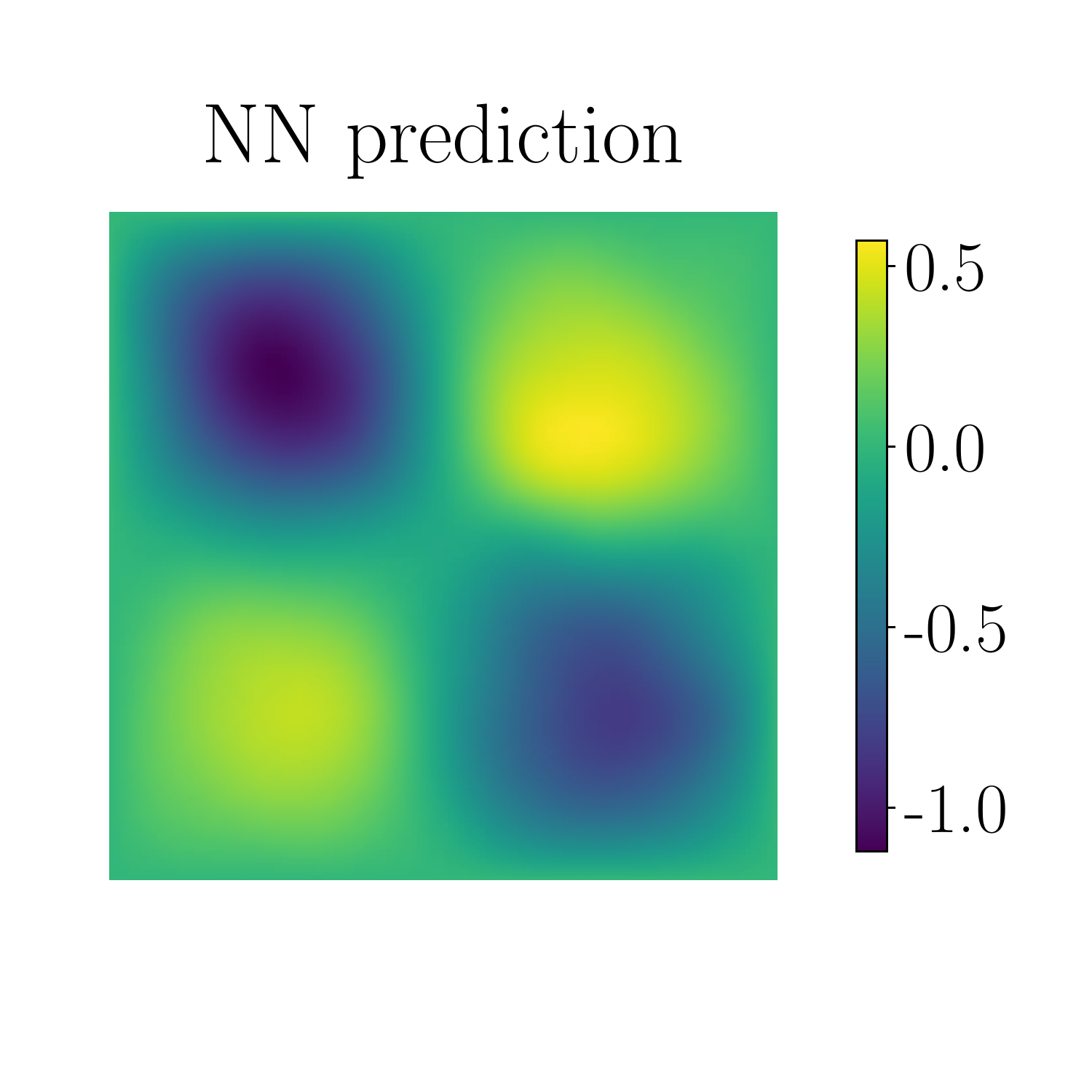}
    \end{subfigure}
  \end{minipage}
  \begin{minipage}{0.3\textwidth}
    \begin{subfigure}{\linewidth}
      \includegraphics[width=\linewidth, trim=0 40 0 40]{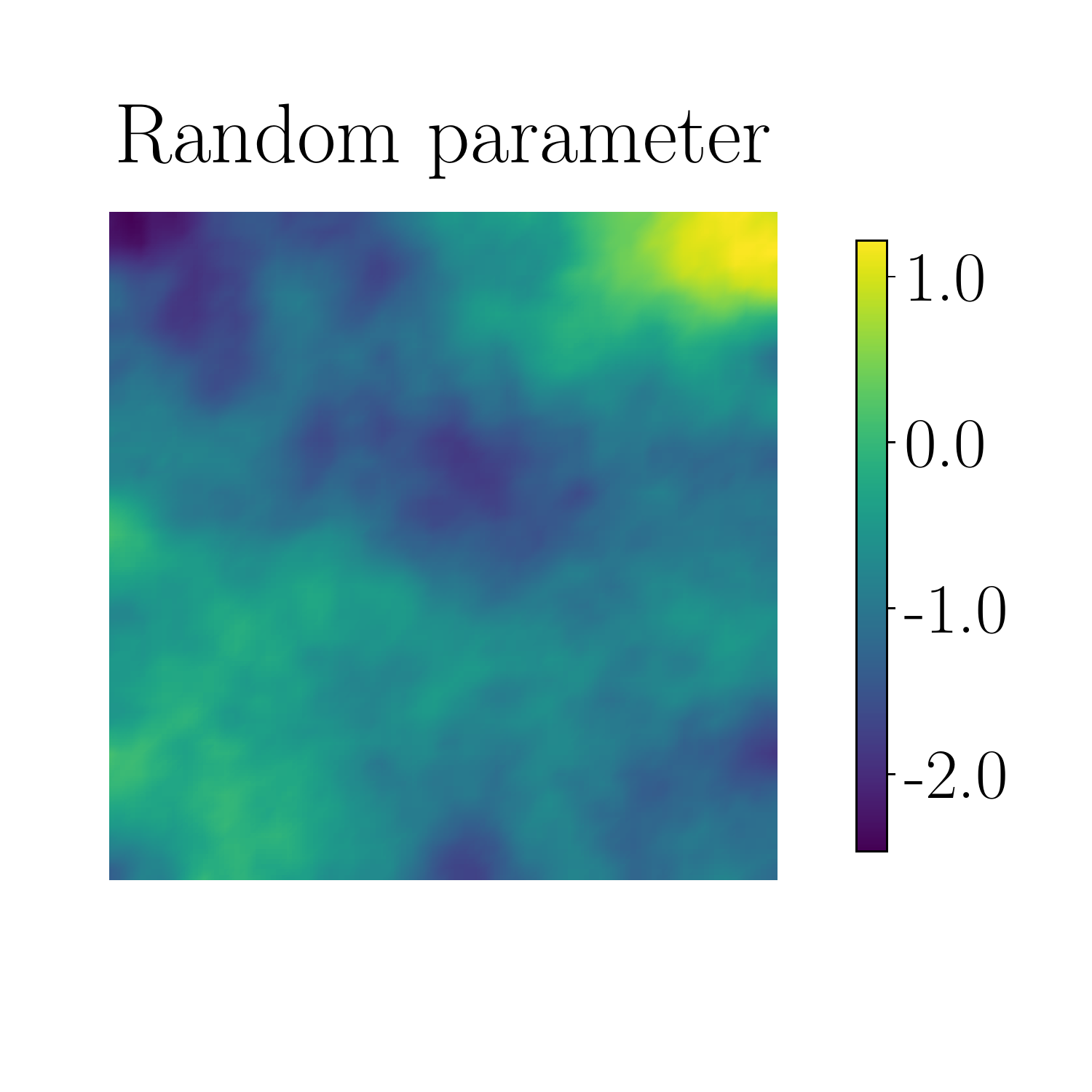}
    \end{subfigure}
    \begin{subfigure}{\linewidth}
      \includegraphics[width=\linewidth, trim=0 40 0 40]{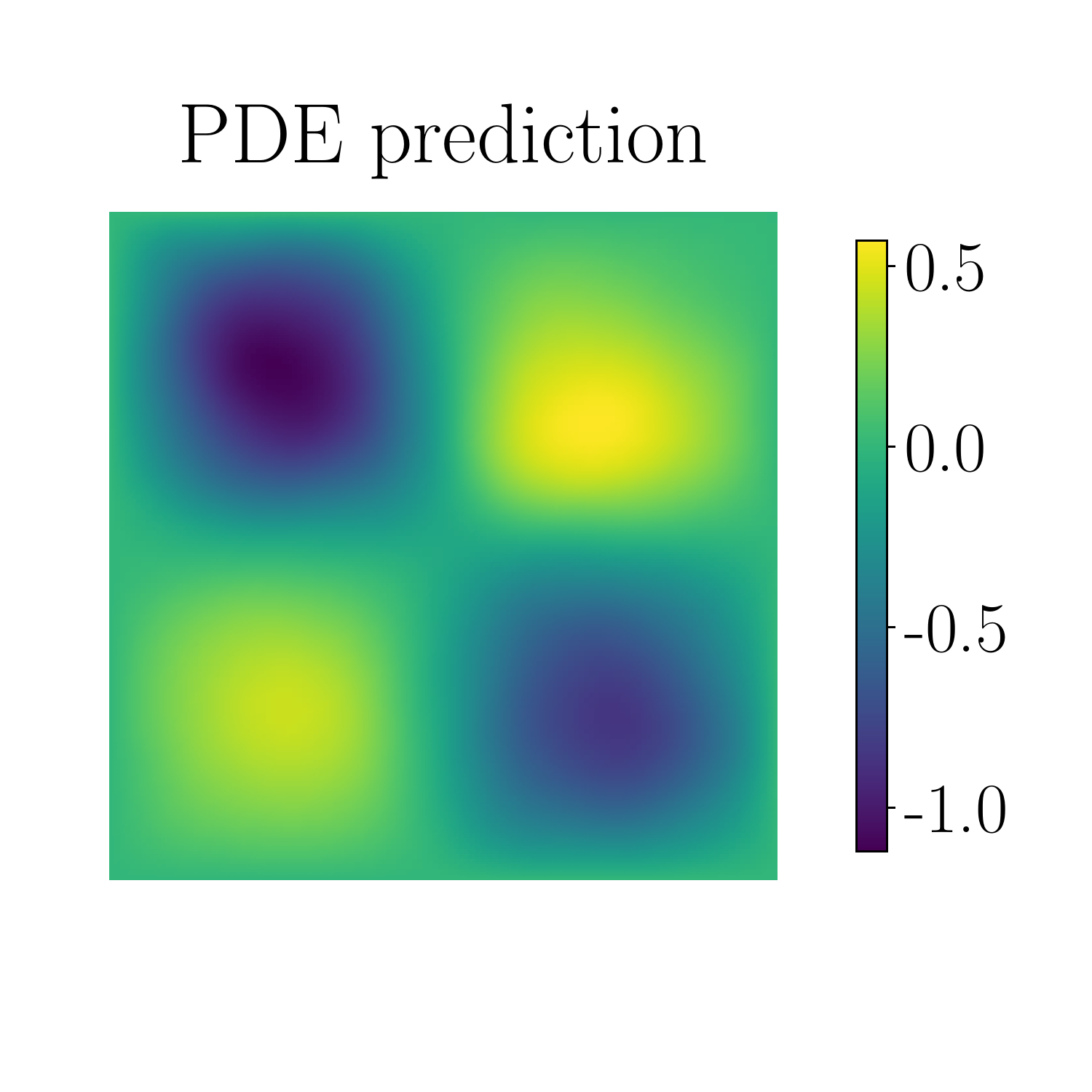}
    \end{subfigure}
  \end{minipage}
  \begin{minipage}{.35\textwidth}
    \includegraphics[width=\linewidth, trim=40 0 80 0]{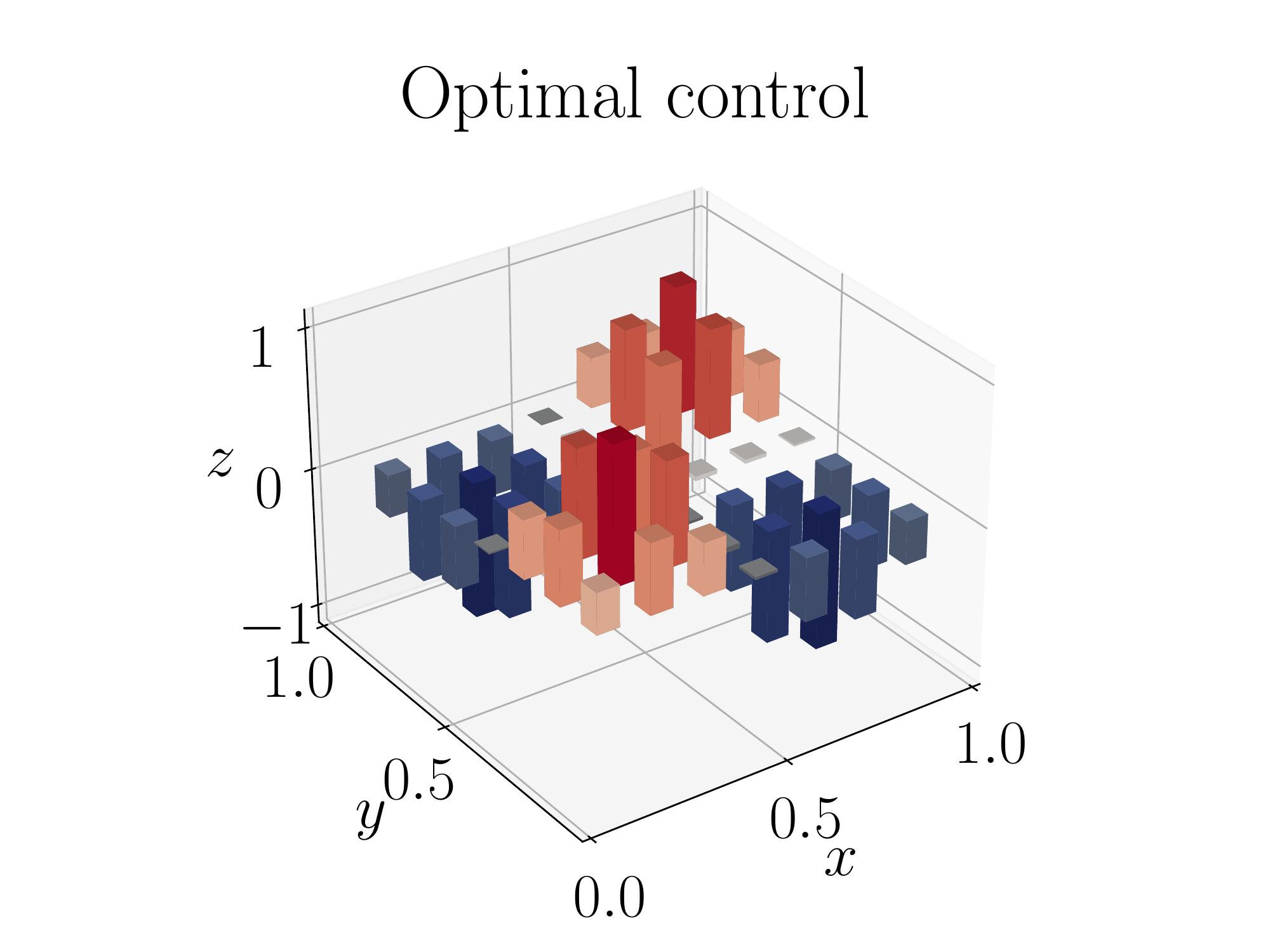}
  \end{minipage}
  \caption{Top-left: a sinusoidal target $u_{\text{target}}$. Top-middle: a random parameter sample $m$. Right: optimal control $z_{\NN}^{*}$ using the neural operator. Bottom-left and bottom-middle: neural surrogate $u_{w}(m,z_{\NN}^{*})$ and PDE solution $u(m,z_{\NN}^{*})$ at $m$ and $z_{\NN}^{*}$. The SAA optimization problem with MR-DINO is solved in 52 seconds.
  }
  \label{fig:ouu_poisson_sinusoid_states}
  \vskip -0.5cm
\end{figure}

The OUU problem is solved using the neural operator by SAA with a sample size of $N = 2,048$, using L-BFGS-B to obtain the optimal control $z_{\NN}^{*}$. For clarity of presentation, we only consider the larger network architecture with $r_M = 100$ and $r_U = 300$ for the remainder of this section. Figure \ref{fig:ouu_poisson_sinusoid_states} shows an example of the optimization solution for the sinusoidal target state with MR-DINO trained on 2,048 samples. 
In particular, we present the target state, the computed optimal control $z_{\NN}^{*}$, a sample of the random coefficient field $m$, and the state corresponding to the optimal control for the sampled $m$ as predicted by both the neural operator approximation and the true PDE. 
Comparing the neural network prediction of the state to the true PDE, we observe that visually, the neural operator can accurately approximate the PDE solution operator. 
Moreover, the computed minimizer aims to match the state to the target while being robust to the different possible realizations of $m$. 

\subsubsection{Cost-accuracy comparison with PDE solutions using SAA}
To quantify the accuracy of the neural operator OUU solution relative to its costs, we compare the CVaR 
at the optimal controls $z_{\NN}^{*}$ and
$z_{\PDE}^{*}$.
For the PDE, we solve the optimal control problem for $z_{\PDE}^{*}$ using SAA with sample sizes of $N = $16, 32, 64, 128 and 256.
An accurate reference solution of the OUU problem $z_{\mathrm{ref}}^{*}$ is also computed using SAA with $N = 4,096$.

\begin{figure}[!htb]
  \centering
  \begin{subfigure}{0.45\textwidth}
    \includegraphics[width=\linewidth, trim=30 20 0 10]{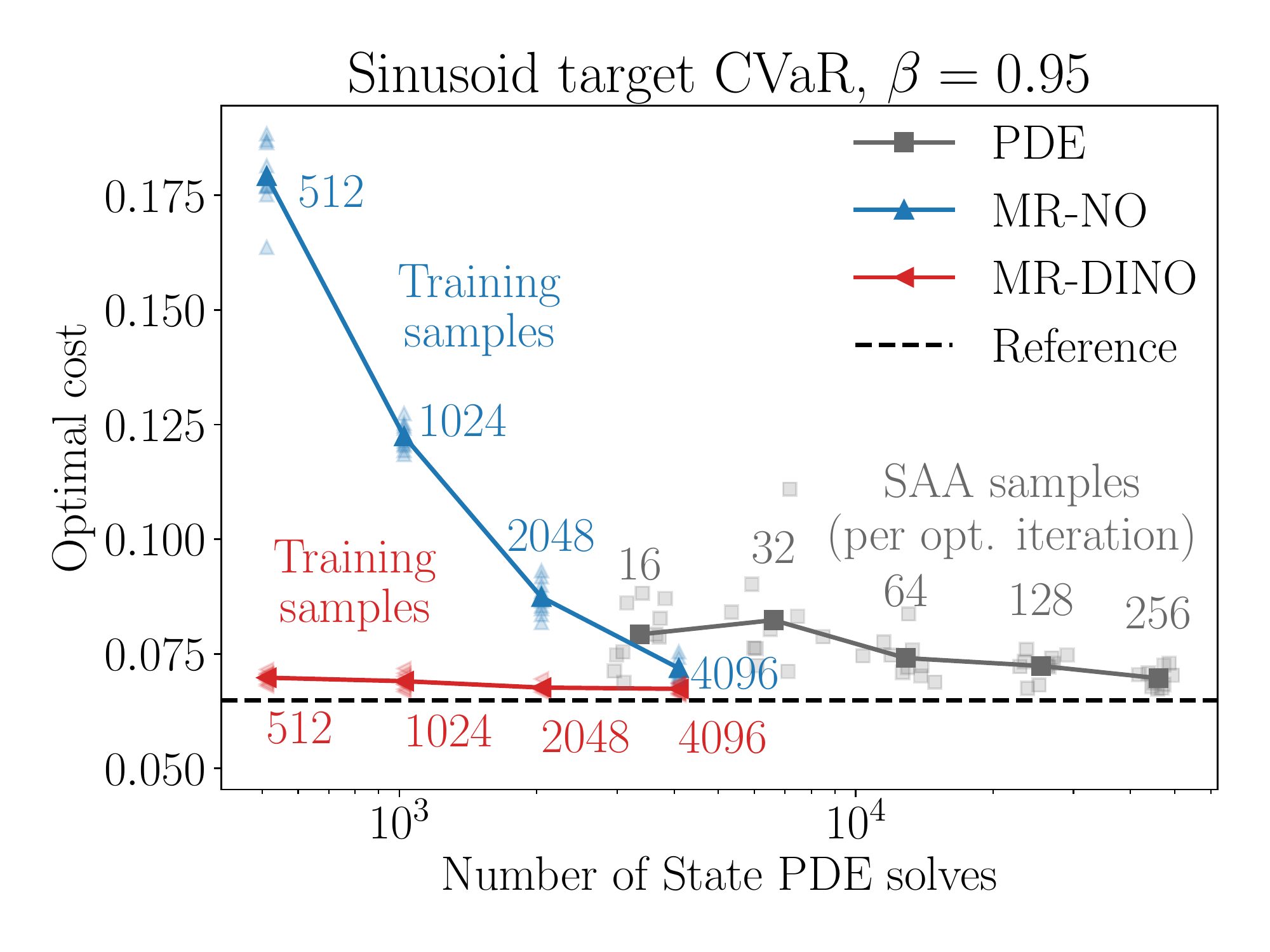}
  \end{subfigure}
  \begin{subfigure}{0.45\textwidth}
    \includegraphics[width=\linewidth, trim=30 20 10 10]{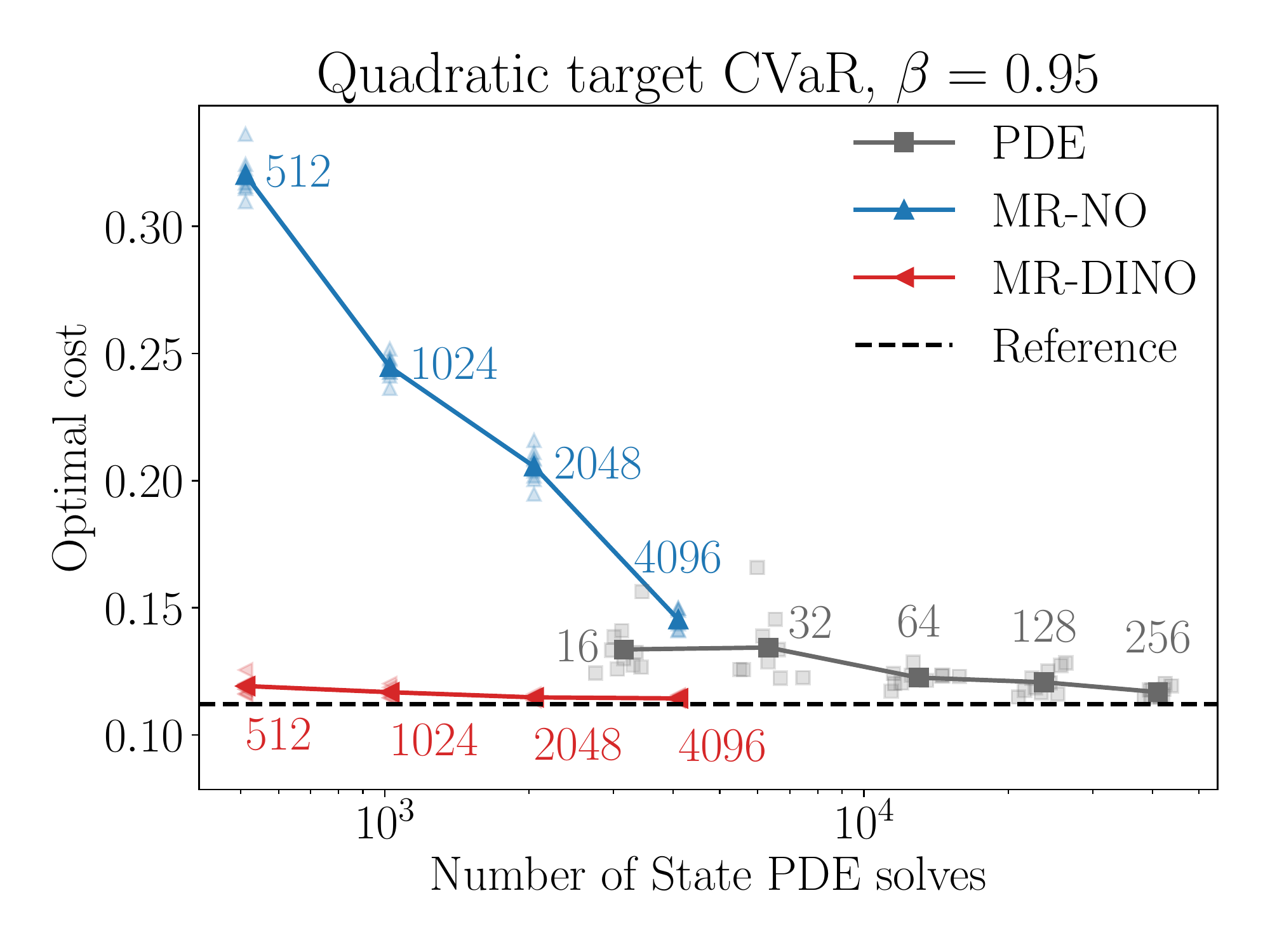}
  \end{subfigure}
  \caption{Optimal cost values for the sinusoidal (left) and quadratic (right) target states at optimal solutions versus the number of state PDE solves required. 
  }
  \label{fig:ouu_poisson_cvar}
  \vskip -0.5cm
\end{figure}

The CVaR values at optimal controls are plotted in Figure \ref{fig:ouu_poisson_cvar} as a function of the number of state solves required to obtain the optimal control. The CVaR values are evaluated by a Monte Carlo estimator with 8,192 samples, and are averaged across 10 different runs of the OUU problem with different sets of parameter samples. 
In the case of the neural operators, the training initializations are also randomized across the 10 runs. 
Notice that due to sampling errors, the CVaR values for the PDE-based optimal controls are suboptimal relative to the reference value, and tend to vary significantly between runs. 
The optimality improves as the sample sizes increase. 
On the other hand, the CVaR values from MR-DINO are much closer to the reference value, and are much more consistent across runs. This suggests that the bias committed by the neural operators is less problematic than the variance (sampling error) arising from the PDE. 
Moreover, for the same number of training samples, the MR-DINO with Jacobian training greatly improves the optimal solutions. 

\begin{figure}[!htb]
  \centering
  \begin{subfigure}{0.45\textwidth}
    \includegraphics[width=\linewidth, trim=20 20 10 10]{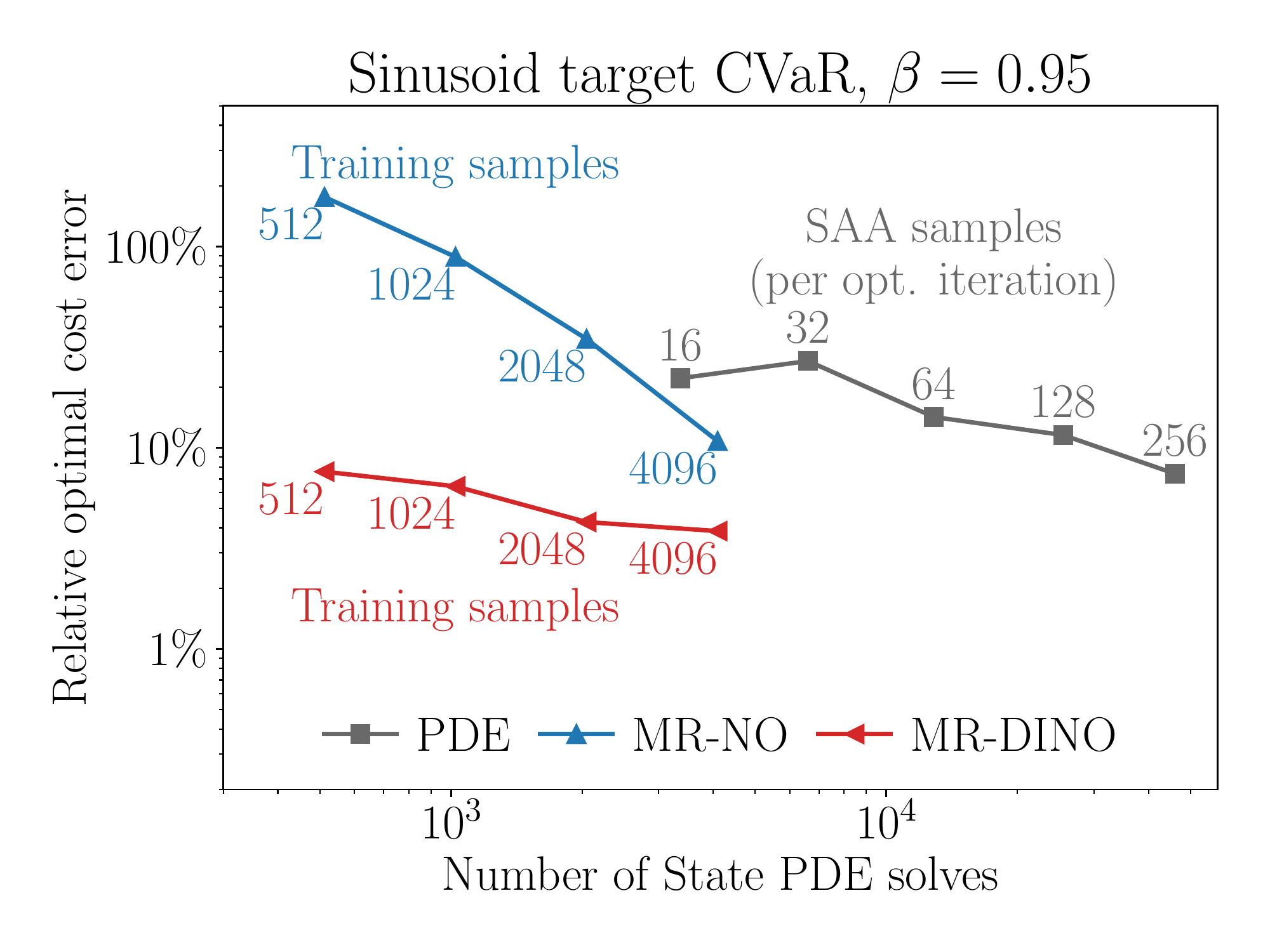}
  \end{subfigure}
  \begin{subfigure}{0.45\textwidth}
    \includegraphics[width=\linewidth, trim=20 20 10 10]{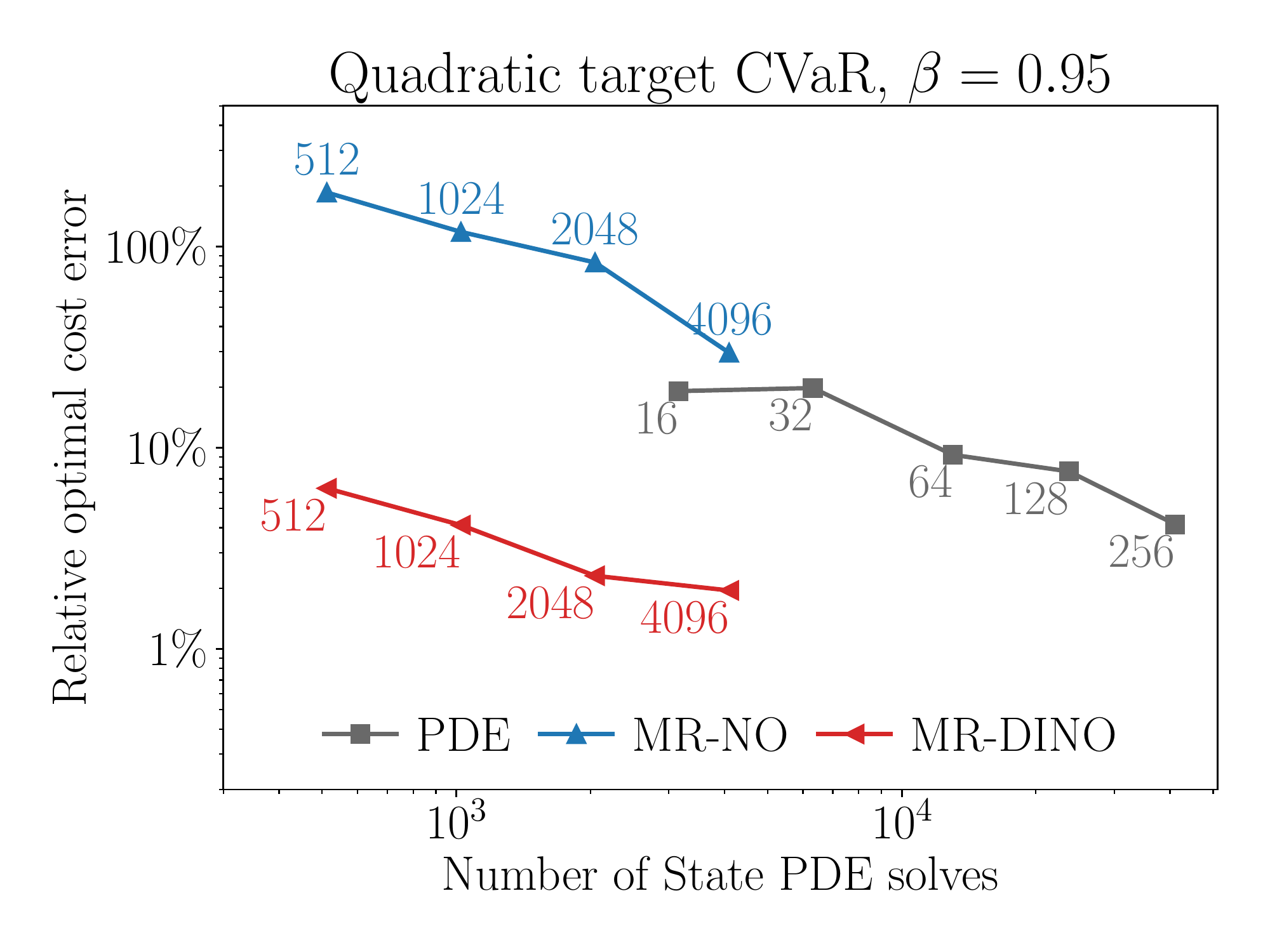}
  \end{subfigure}
  \begin{subfigure}{0.45\textwidth}
    \includegraphics[width=\linewidth, trim=20 20 10 10]{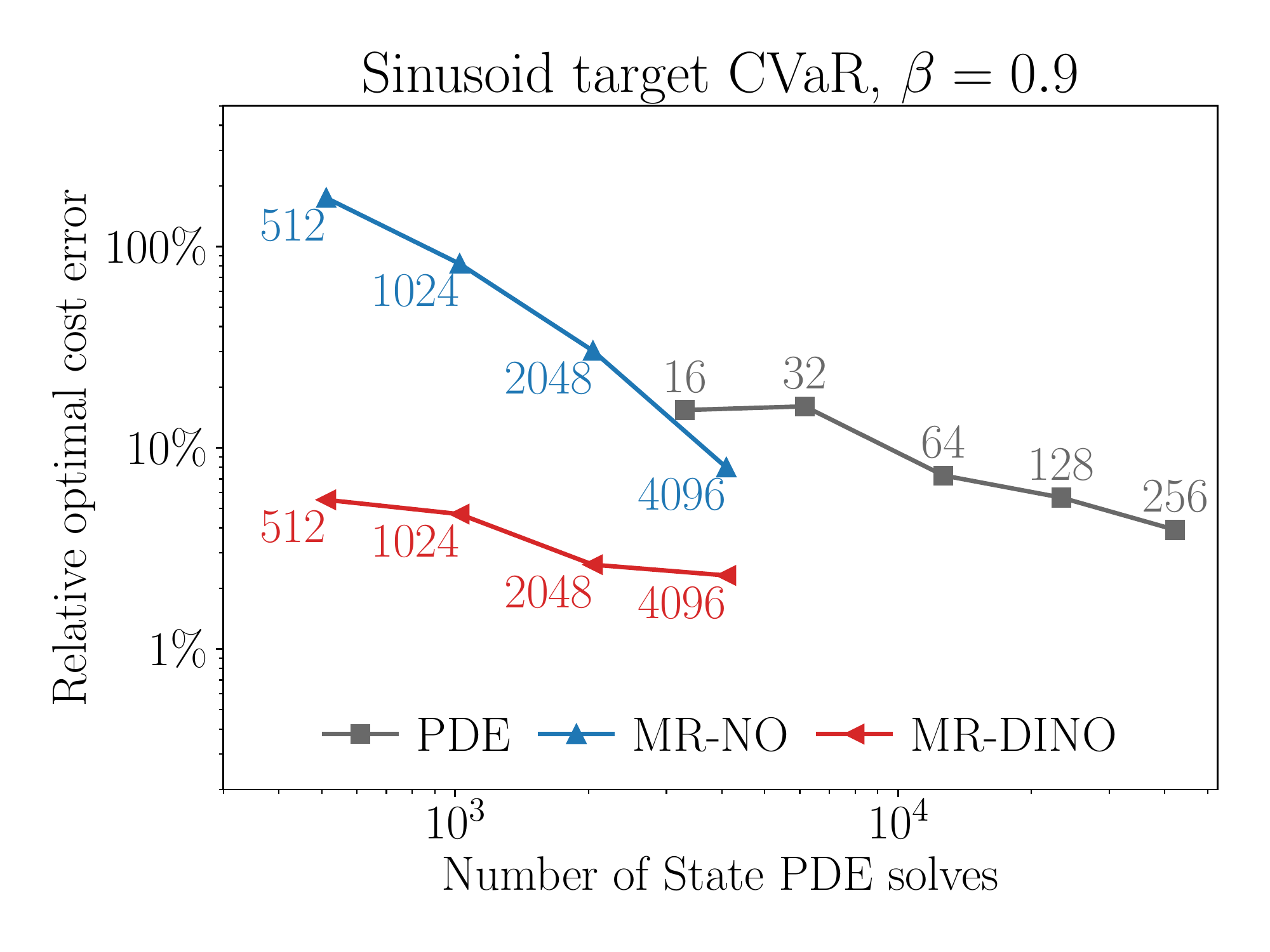}
  \end{subfigure}
  \begin{subfigure}{0.45\textwidth}
    \includegraphics[width=\linewidth, trim=20 20 10 10]{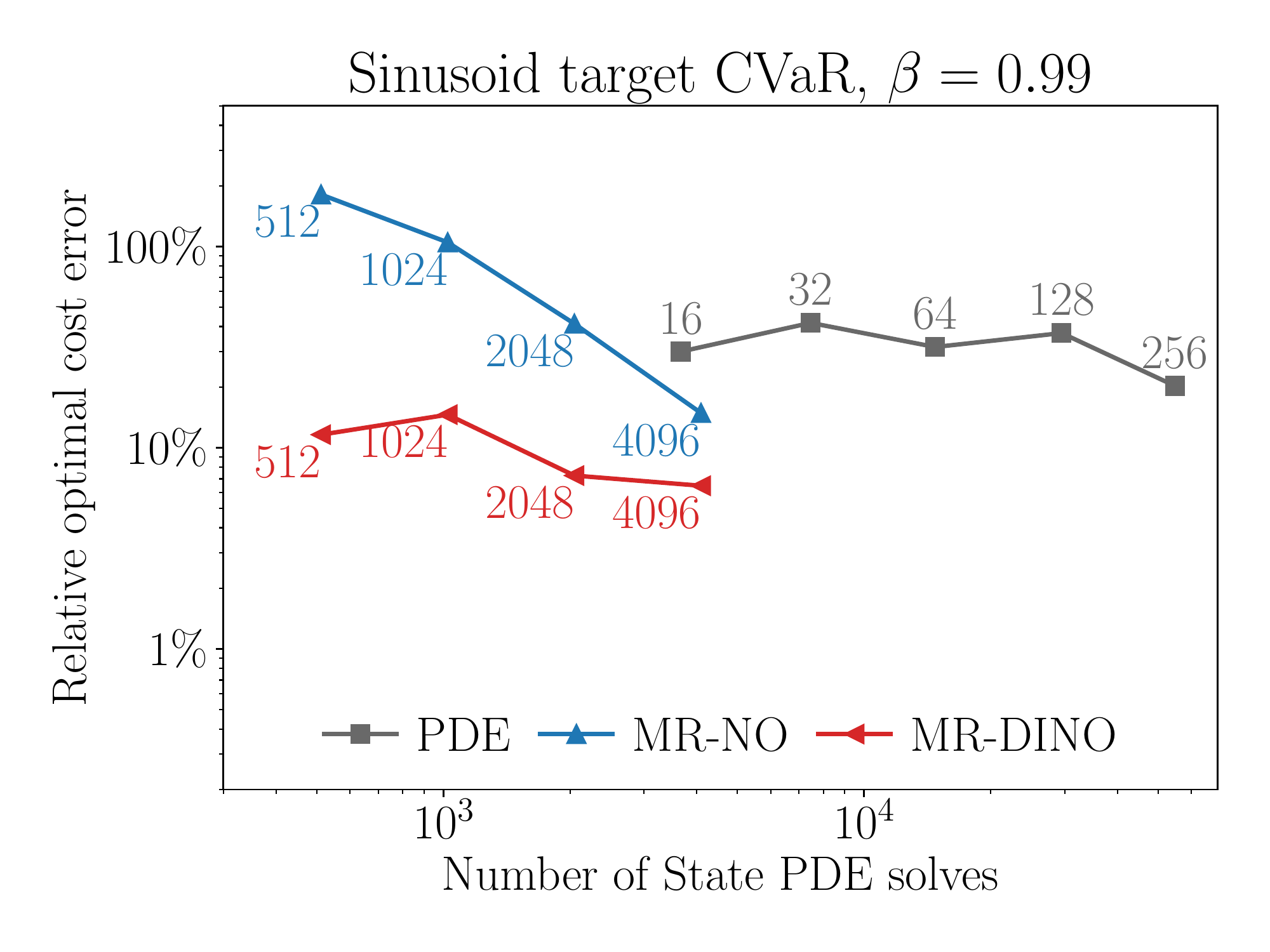}
  \end{subfigure}
  \caption{Relative error of the cost functional versus the number of state PDE solves required for the optimization with different target states and $\beta$ values.
  }
  \label{fig:ouu_poisson_cvar_error}
  \vskip -0.5cm
\end{figure}

The accuracy is more clearly compared in Figure \ref{fig:ouu_poisson_cvar_error}, where we plot the relative errors of the CVaR values at the optimal controls with respect to the reference PDE solution. 
We additionally include OUU runs where the cost functional is CVaR with quantiles $\beta = 0.9$ and $\beta = 0.99$. 
Here we observe that across all four cases, the neural operators with Jacobian training (MR-DINO) are able to obtain OUU solutions with lower cost values than the PDE-based solutions, despite using over $10 \times$ fewer state PDE solves. 
On the other hand, neural operators without Jacobian training are only more cost-effective than the PDE-based solutions across a single OUU run in the sinusoidal cases.
Evidently, Jacobian training adds a valuable source of information 
at a fraction of the cost of the state PDE solve, and is highly beneficial to the accuracy of the neural operator OUU solutions, especially in the low-data regime. 
It is important to note that in these results, the same neural operators are used to solve the OUU problems across all four cases.
The training cost can easily be amortized across different choices of the performance functions and risk measures, which in this example, correspond to different target states and $\beta$ values.
Moreover, we observe that for both the neural operator and PDE results, the optimality of the OUU solutions degrade as the quantile value $\beta$ increases. Larger $\beta$ values represent increased weighting of the tail of the distribution, which require more samples for both neural operator training and accurate estimation of the CVaR. 
Nevertheless, for the $\beta = 0.99$ case considered, the MR-DINO solutions are still more than an order of magnitude more cost-effective than the PDE solutions. 

\subsection{Optimal boundary control of flow around a bluff body} \label{section:navstok2d}
Next, we consider the boundary control of flow around a bluff body with uncertain inlet conditions governed by the steady state Navier--Stokes equation. We consider the two dimensional domain shown in Figure \ref{fig:ns2d_domain}, where the setup is analogous to that considered in \cite{ManzoniQuarteroniSalsa21}. We write the flow equations as  

\begin{subequations}
  \begin{align}
    (\mathbf{u} \cdot \nabla) \mathbf{u} + \nabla p - \nu \Delta \mathbf{u} &= 0 \qquad x \in \Omega, \label{eq:ns_start}\\
    \nabla \cdot \mathbf{u} &= 0                               \qquad x \in \Omega, \\
    \mathbf{u} - e^m \mathbf{e}_1 &= 0                                  \qquad x \in \Gamma_I, \\
    \mathbf{T}(\mathbf{u},p) \mathbf{n} &= 0                            \qquad x \in \Gamma_O, \\
    \mathbf{u} \cdot \mathbf{n} = 0, \; \mathbf{T}(\mathbf{u},p) \mathbf{n} \cdot \mathbf{t} &= 0 \qquad x \in \Gamma_W, \\
    \mathbf{u} &= 0                                            \qquad x \in \Gamma_B, \\
    \mathbf{u} - \phi(z) \mathbf{n} &= 0                       \qquad x \in \Gamma_C. \label{eq:ns_end}
  \end{align}
  \end{subequations}      


In the above equations, the state $u = (\mathbf{u},p)$ consists of the velocity and pressure fields, $\nu=0.005$ is the viscosity, $\mathbf{T} = -p \mathbf{I} + 2 \nu \; \symm(\nabla \mathbf{u})$ is the stress tensor, where $\symm(\mathbf{A}) := (\mathbf{A} + \mathbf{A}^T)/2$. 
The boundaries $\Gamma_I$, $\Gamma_O$, $\Gamma_W$, $\Gamma_B$, and $\Gamma_C$ are as labeled in Figure \ref{fig:ns2d_domain}, $\mathbf{n}$ and $\mathbf{t}$ are the unit normal and tangent vectors along the boundaries respectively.  
The inflow velocity $\mathbf{u}|_{\Gamma_I}$ is given by the trace of a 2D lognormal random field, $e^m|_{\Gamma_I} \mathbf{e}_1$, where $m \sim \cN(0, \cC)$ and $\mathbf{e}_1 = (1,0)$.
The control variables define the normal flow velocity along the sides of the bluff body $\Gamma_B$ using a cubic B-spline representation,
$\phi(z)(x)= \sum_{i=1}^{18} z_i \phi_i(x), $
where $z_i, \phi_i$ are the weights and basis functions for the upper ($i = 1, ..., 9$) and lower ($i = 10, ..., 18$) sides respectively.

\begin{figure}[!htb]
  \centering
  \includegraphics[width=0.55\linewidth]{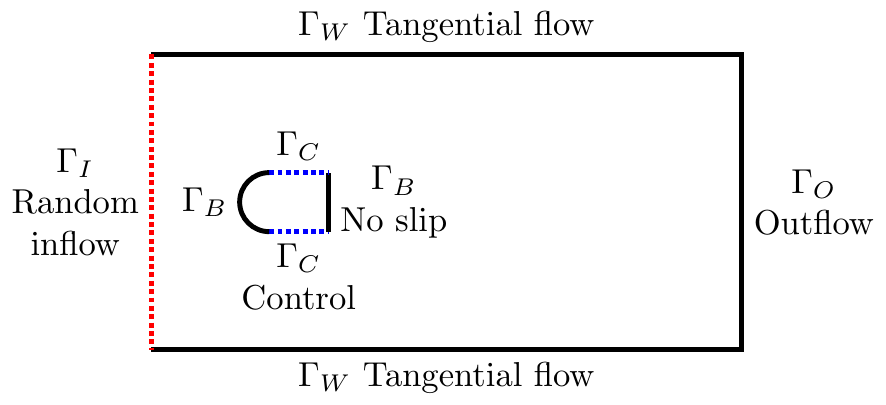}
  \caption{Flow domain $\Omega = (2,0) \times (1,0)$ with labelled boundaries.}
  \label{fig:ns2d_domain}
  \vskip -0.5cm
\end{figure}

In this problem, we consider two different performance functions. The first is the viscous dissipation rate of the flow, defined as  
\begin{equation}
  Q_{\text{Dissipation}}(\mathbf{u}) := 2 \nu \int_{\Omega} \symm(\nabla \mathbf{u}) : \symm(\nabla \mathbf{u}) dx.
\end{equation}
This corresponds to the drag force on the bluff body.
We also consider a tracking type objective, where we seek to minimize the difference between the velocity field and a target velocity field $\mathbf{u}_{\text{target}}$ behind the bluff body, 
\begin{equation}
  Q_{\text{Tracking}}(\mathbf{u}) := \int_{\Omega_o} |\mathbf{u} - \mathbf{u}_{\text{target}} |^2 dx,
\end{equation}
taking $\Omega_{o} = (0.6, 2) \times (1, 0)$ and $\mathbf{u}_{\text{target}} = (1, 0)$. 
Additionally, we adopt an $L^2$ penalization term on the velocity profile induced by the control, 
\begin{equation}
  \cP(z) = \alpha \int_{\Gamma_C} |\phi(z)|^2 ds,
\end{equation}
where $\alpha$ is a weighting parameter on the penalization term.
We consider the problem of minimizing of the CVaR with $L^2$ penalization subject to the PDE constraint, i.e.,
\begin{equation}
  \min_{z \in \cZ_{\ad}} \CVaR_{\beta}[Q](z) + \cP(z) \qquad \text{s.t. } \eqref{eq:ns_start}-\eqref{eq:ns_end}.
\end{equation}

The PDEs are discretized on a triangular mesh using Taylor--Hood elements for the state $u = (\mathbf{u},p)$ with quadratic elements for the velocity field $\mathbf{u}$ and linear elements for the pressure field $p$. This leads to the state dimension $d_U = 42,649$. Quadratic elements are also used for the random parameter field. This discretization has a nominal dimension of $d_M = 18,921$ since it is sampled on the entire domain $\Omega$, but its trace on the left boundary has 101 degrees of freedom. The state PDE is solved using a backtracking Newton method with Galerkin--Least Squares (GLS) stabilization. 

\subsubsection{Solution by neural operator}
We generate the training data for the 2D control problem using the input distribution $\nu_{m} \otimes \nu_{z}$, with a Gaussian distribution $\nu_{z} = \cN(0, I)$ as the auxiliary control distribution. 
For the reduced bases, we consider a POD basis with rank $r_U = 200$ for the state, computed from 256 samples from the training data. 
Though dimension reduction of the random inflow parameter is not necessary for this discretization, we still consider its representation in the reduced basis, since this representation is amenable to further mesh refinement. 
Here, we adopt a rank of $r_M = 100$ for the KLE basis.
We use dense neural networks with $2$ hidden layers of width $400$ to approximate the mapping from the reduced input space to the reduced output space. 
We train the neural networks using training data sets of size 256, 512, 1,024, and 2,048, both with and without Jacobian training. 
Testing errors of the trained neural operators and their Jacobians are shown in Figure \ref{fig:ns2d_training_errors}.
Similar to the previous example, we see that Jacobian training improves both the state and Jacobian accuracy of the neural operators.

\begin{figure}[!htb]
  \centering
  \begin{subfigure}{0.45\textwidth}
    \centering
    \includegraphics[width=\textwidth, trim=30 30 0 10]{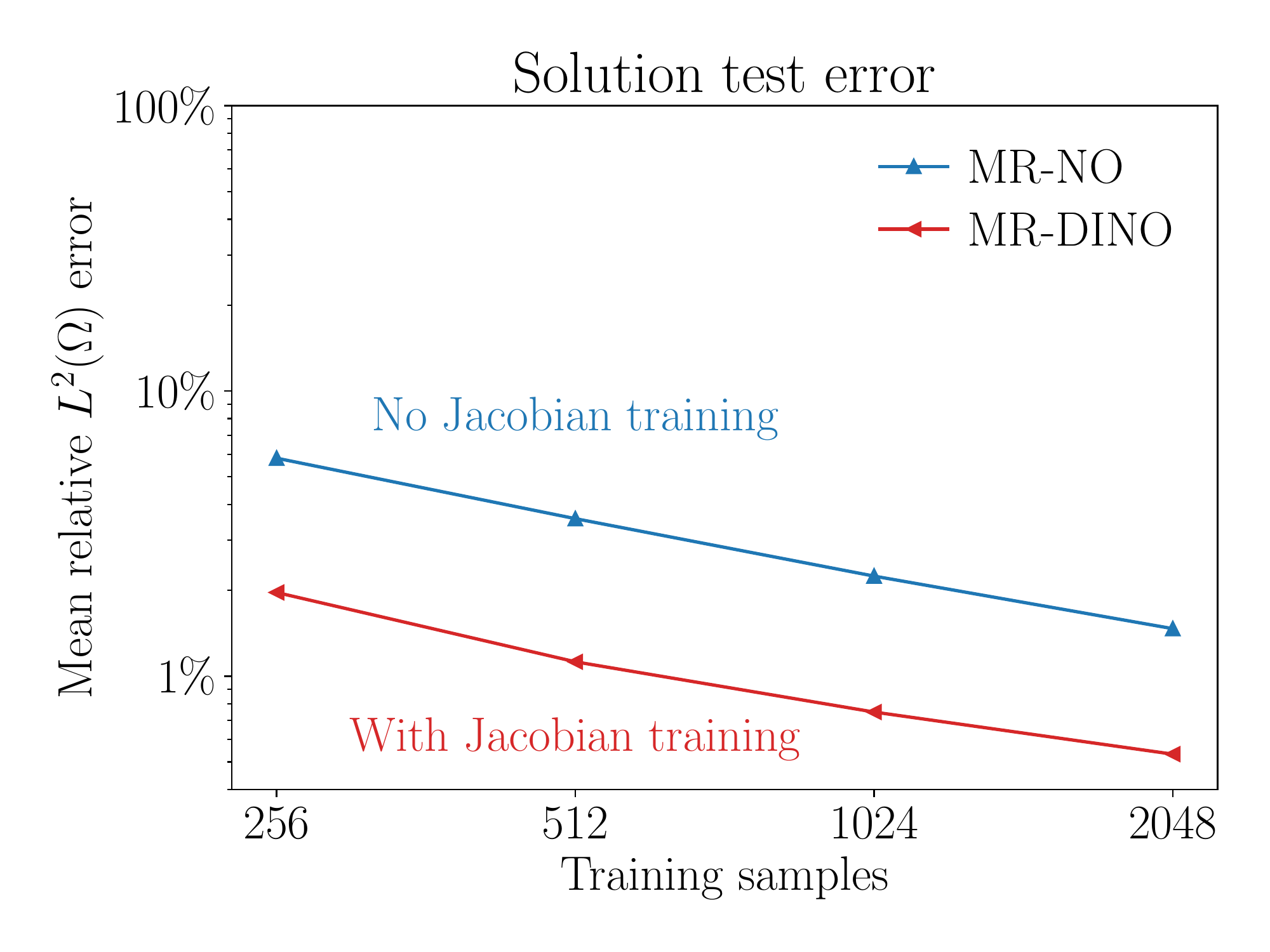}
  \end{subfigure}
  \begin{subfigure}{0.45\textwidth}
    \centering
    \includegraphics[width=\textwidth, trim=30 30 0 10]{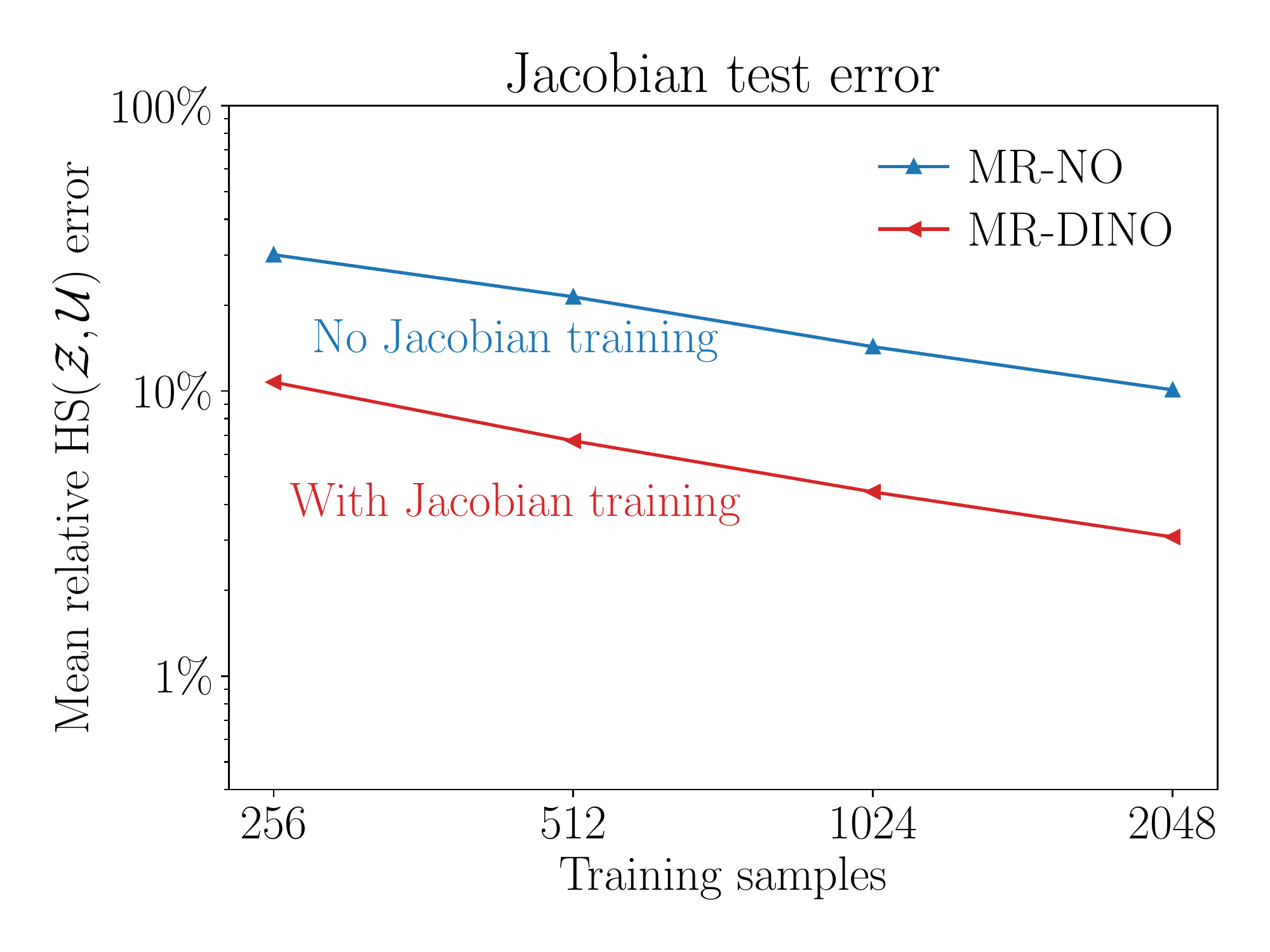}
  \end{subfigure}
  \caption{State ($L^2(\Omega)$) (left) and Jacobian ($\mathrm{HS}(\cZ, \cU)$) (right) testing errors of the 2D Navier--Stokes neural operators trained with (MR-DINO) and without Jacobian training (MR-NO). }
  \label{fig:ns2d_training_errors}
  \vskip -0.5cm
\end{figure}



We solve the OUU problem using the trained neural operator by SAA with a sample size of $N = 2,048$, and using the L-BFGS algorithm to obtain the optimal controls $z_{\NN}^{*}$. 
As an example, we consider the CVaR of the viscous dissipation rate objective with $\beta = 0.95$ and an $L^2$ penalization with $\alpha = 10^{-2}$. 
Figure \ref{fig:ns2d_controlled_uncontrolled} compares the uncontrolled flow to the controlled flow for a random inflow sample from $\nu_m$, where the controlled flow uses the optimal control computed from a MR-DINO with only 256 training samples. 
The controlled flow exhibits significantly smaller recirculation regions behind the bluff body, which effectively reduces the drag force.

\begin{figure}[!htb]
  \centering
  \begin{subfigure}{0.4\textwidth}
    \includegraphics[width=\linewidth]{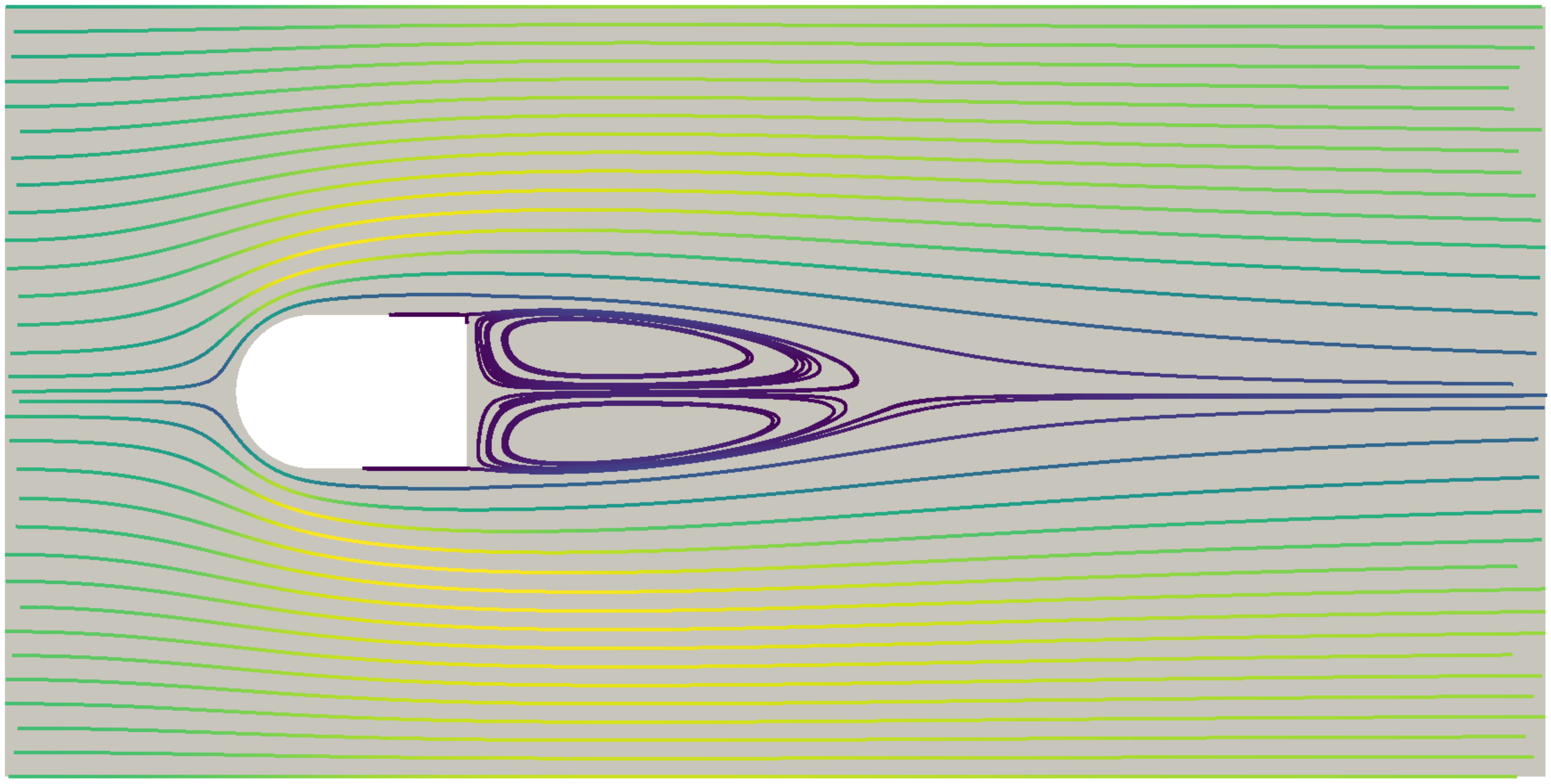}
  \end{subfigure}
  \begin{subfigure}{0.4\textwidth}
    \includegraphics[width=\linewidth]{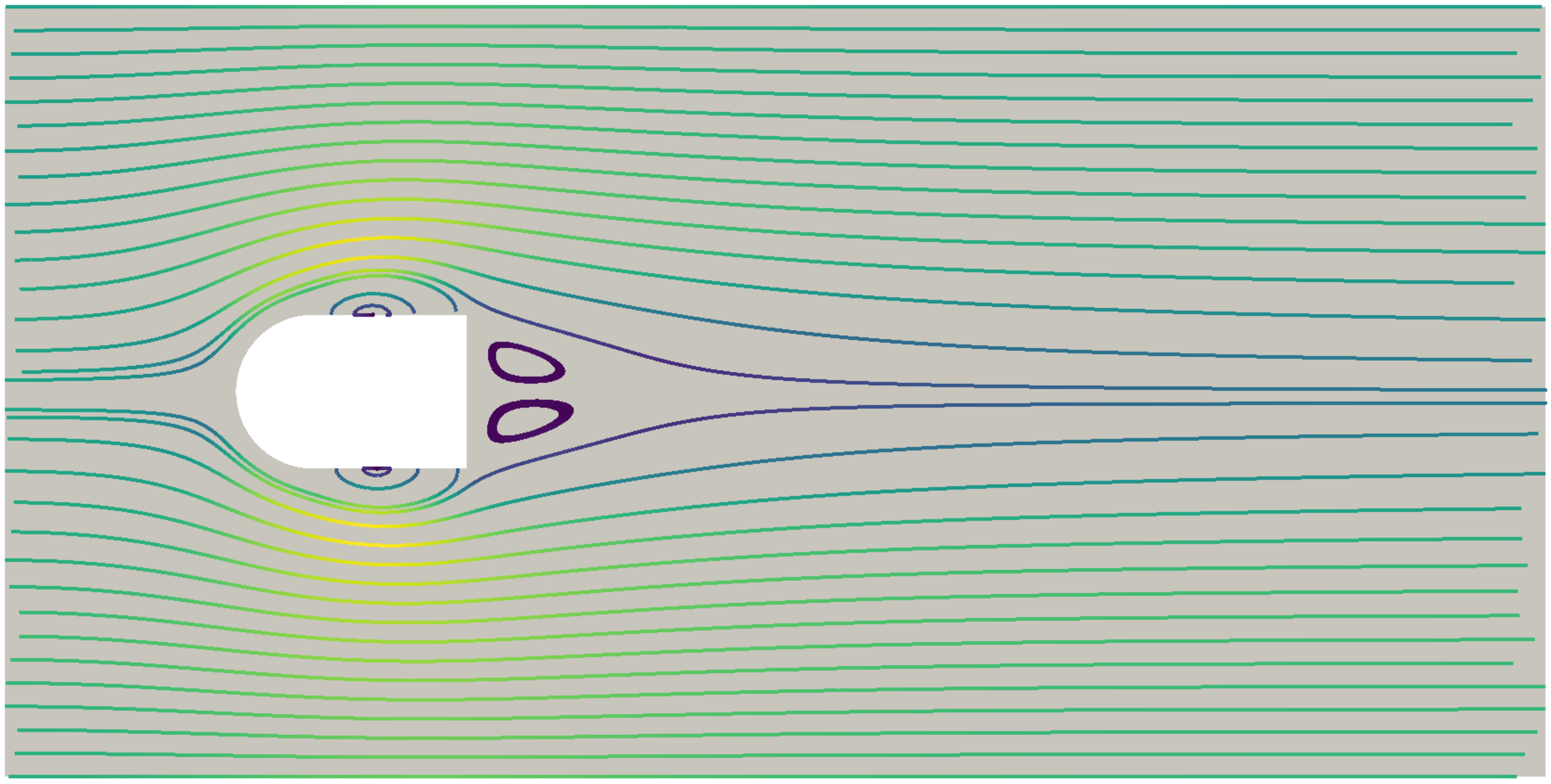}
  \end{subfigure}
  \caption{Samples of flow fields using no control (left) and the optimal control (right) computed using MR-DINO with 256 training samples. The SAA optimization problem with MR-DINO is solved in 34 seconds.}
  \label{fig:ns2d_controlled_uncontrolled}
  \vskip -0.5cm
\end{figure}

\subsubsection{Comparison with PDE solutions using SAA}
To quantify the accuracy of the neural operator, we compare the neural operator solutions against PDE solutions obtained by SAA with 16, 32, 64, and 128 samples. A reference solution is obtained by the PDE using 4,096 samples for the SAA. In addition to the viscous dissipation objective, we also consider the CVaR of the tracking objective with $\beta = 0.95$ and the penalty parameter $\alpha = 1$. We present in Figure \ref{fig:ns2d_cost_error} the relative error in the cost functional values at the optimal controls with respect to the reference PDE-based optimal solution for both the viscous dissipation and tracking objectives. As in the semilinear elliptic PDE examples, the errors are averaged across 10 runs with different random inflow samples and neural network initializations.

\begin{figure}[!htb]
  \centering
  \begin{subfigure}{0.45\textwidth}
    \includegraphics[width=\linewidth, trim=30 30 0 10]{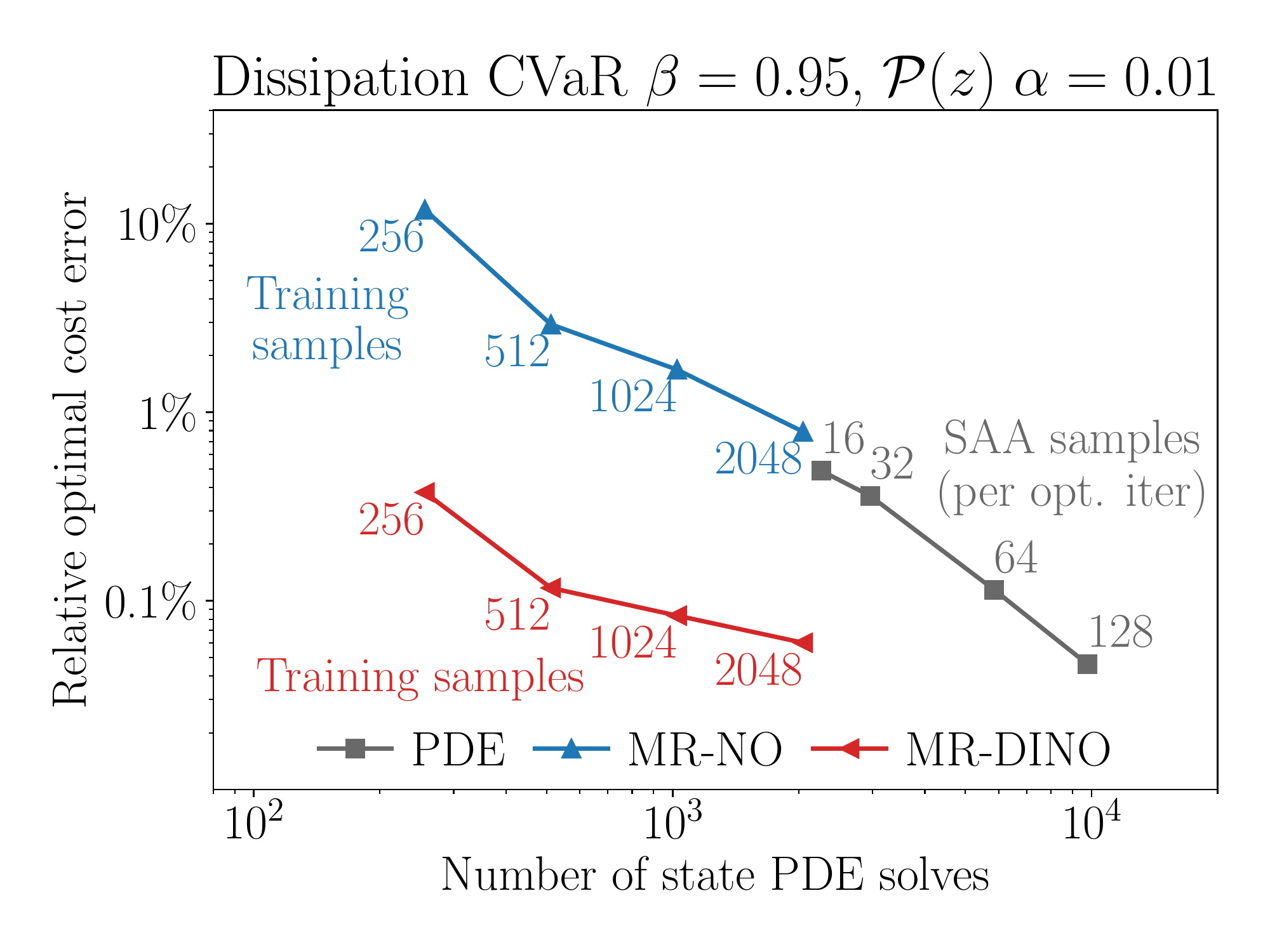}
  \end{subfigure}
  \begin{subfigure}{0.45\textwidth}
    \includegraphics[width=\linewidth, trim=30 30 0 10]{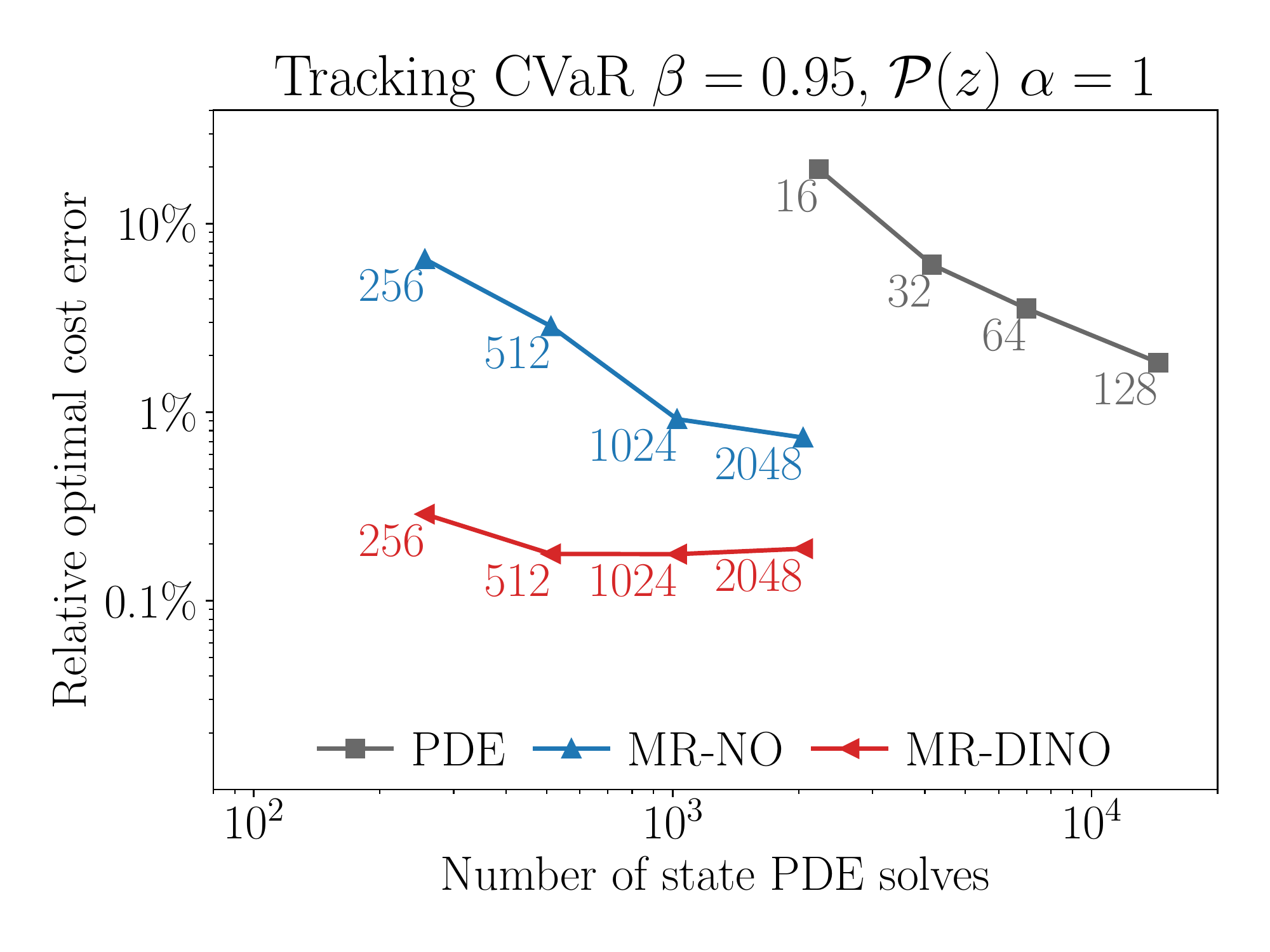}
  \end{subfigure}
  \caption{Relative error of the cost functional versus the number of state PDE solves required for the optimization with different objectives and penalty parameters. 
  }
  \label{fig:ns2d_cost_error}
  \vskip -0.5cm
\end{figure}

For the viscous dissipation objective, we observe that the PDE-based approach achieves near optimal OUU solutions with small sample sizes. 
This suggests that the SAA cost functional is well-correlated with the true CVaR cost functional, and that the OUU solution is not very sensitive to the random inflow parameter. 
Nevertheless, we observe that the neural operators with Jacobian training are able to achieve similar accuracy in optimal cost as the PDE-based approach using approximately 10 times fewer state PDE solves. In contrast, without the Jacobian training the neural operators are not as cost-effective than the PDE-based OUU solve if used only for a single optimization. 
On the other hand, the PDE-based optimal solutions for the tracking objective exhibit much larger errors compared to the reference. 
This is likely because the optimal control for the tracking objective is much more sensitive to the inflow profile. 
In this example, the neural operators with Jacobian training attain an order of magnitude smaller errors than the PDE solutions, while only using 100$\times$ fewer state PDE solves. 
Even without Jacobian training, the neural operators demonstrate a 10$\times$ improvement in cost-effectiveness compared to the PDE solutions.

We make an important observation here on the relationship between the generalization accuracy of the neural operators and their performance in solving OUU problems.
Figure \ref{fig:ns2d_training_errors} shows that the average generalization error for MR-DINO trained on 256 samples is larger than that of the MR-NO trained on 2,048 samples (without Jacobian training).
However, for the OUU problems as shown in Figure \ref{fig:ns2d_cost_error}, we see that the MR-DINO trained on 256 samples achieves lower cost values than the MR-NO trained on 2,048 samples in both the dissipation rate and tracking objectives. 
This suggests that a high $L^2$ generalization accuracy cannot guarantee the neural operator's performance in solving OUU problems, which can be corrupted by the Jacobian errors 
that are propagated to the gradients as shown in Proposition \ref{prop:gradient_error}.

\subsection{Comparison of timings} \label{section:comparison_of_timings}
First of all, to demonstrate the small incremental computational cost for generating the Jacobian data, we report the computation time for the state PDE solve, the first linearized PDE solve in Jacobian computation for which an LU factorization is computed and stored for the linear operator, and the subsequent LU solve for which only a back substitution is needed. The results are presented in Table \ref{tab:pde_timing} for the semilinear elliptic and 2D Navier--Stokes examples.

\begin{table}[!htb]
  \centering
    \begin{tabular}{|l | r | r|} 
    \hline
      Time (in seconds) & Semilinear Elliptic & 2D Navier--Stokes \\
    \hline
      State PDE solve & 0.869 & 12.81 \\
      Jacobian (LU) & 0.366 & 2.95\\
      Jacobian (Back sub.) & 0.004 & 0.02 \\
    \hline
   \end{tabular}
  \caption{Time (in seconds) for the state PDE solve, the first linearized PDE solve in Jacobian computation for which an LU factorization is computed and stored for the linear operator, and the subsequent LU solve for which only a back substitution is needed. The reported timings are averaged over 100 random samples of $(m,z)$.}
  \label{tab:pde_timing}
  \vskip -0.5cm
\end{table}

We then present in Table \ref{tab:training_times} the average training time for the neural operators with and without Jacobian training in the two examples.
The training time is broken down into three parts for (1) the training data generation (state and Jacobian training pairs), (2) preprocessing of data, and (3) neural network training.  We use the networks with two hidden layers of widths $(400, 400)$, and use the reduced dimension $(r_M, r_U) = (100, 300)$ and $(100, 200)$ for the semilinear elliptic PDE and 2D Navier--Stokes examples respectively, and use 1,024 training samples.

\begin{table}[!htb]
  \centering
    \begin{tabular}{|l | r r | r r|} 
    \hline
    Time (in seconds) & \multicolumn{2}{c|}{Semilinear Elliptic} & \multicolumn{2}{|c|}{2D Navier--Stokes} \\
        & $L^2$ Only & DINO & $L^2$ Only & DINO \\ 
    \hline
    Data Generation & 889.9 & 1,461.3 & 13,117.4 & 16,486.4\\
    Pre-processing & 2.6 & 22.8 & 29.9 & 122.5 \\
    Training & 140.7 & 650.9 & 142.0 & 486.7 \\
    \hline
    Total & 1,033.2 & 2,135.0 & 13,289.3 & 17,095.6 \\
   \hline
   \end{tabular}
   \caption{Time (in seconds) for data generation, pre-processing (reduced basis construction), and training in neural operator construction with 1,024 training samples. 
   }
   \label{tab:training_times}
   \vskip -0.5cm 
\end{table}

The semilinear elliptic PDE is mildly nonlinear, requiring 2--3 Newton iterations on average. The data generation for DINO (both state and Jacobian) takes approximately $1.5 \times$ the time of generating the state data alone. The Navier--Stokes problem is more nonlinear, requiring many more Newton iterations. The DINO data generation takes approximately $1.25 \times$ the time of generating the state data alone, meaning that the Jacobian data generation comes with only a small increase in cost.
Moreover, as the semilinear elliptic PDE solve is not expensive for the given discretization dimension, the neural network training takes a large portion of the overall time. 
On the other hand, since the neural network training is conducted in the reduced basis spaces, the Navier--Stokes example shows similar training times, despite having a state discretization that is about $10\times$ larger than the semilinear elliptic PDE example.
The increased nonlinearity and state dimension for the Navier--Stokes problem means the training costs are much lower relative to the data generation costs, being only 1--3\% of the overall time. 
In both examples, the small additional computation time for training the MR-DINOs results in large performance improvements in the OUU problem as shown in the comparisons of Sections \ref{section:semilinear_elliptic} and \ref{section:navstok2d}.

We also present the time for the evaluation of the state, the performance function $Q$, and the gradient of the performance function with respect to the control variables $z$ in Table \ref{tab:neural_operator_timing}. 
Here, we report the time for an evaluation using a single random parameter sample and that using a batch of 2,048 samples.
Since there are no architectural differences between the MR-NO and MR-DINO, we only report the timing for the MR-DINO.

\begin{table}[h!]
  \centering
   \begin{tabular}{|l | r r | r r|} 
   \hline
    Time (in seconds) & \multicolumn{2}{c|}{Semilinear Elliptic} & \multicolumn{2}{|c|}{2D Navier--Stokes} \\
        & Single & Batched (2,048) & Single & Batched (2,048) \\ 
   \hline
    State evaluation & 0.0023 & 0.10 &  0.0036 & 0.28 \\
    $Q$ evaluation & 0.0034 & 0.13 & 0.0039 & 0.27 \\
    $Q$ $z$-gradient & 0.0041 & 0.16 & 0.0045 & 0.29 \\
   \hline
   \end{tabular}
   \caption{Neural operator evaluation time for the state, the performance function, and its gradient,
 reported for a single sample and batched evaluation over 2,048 samples.}
   \label{tab:neural_operator_timing}
   \vskip -0.5cm
\end{table}

Comparing the time of the PDE solve in Table \ref{tab:pde_timing} and of the neural operator evaluation in Table \ref{tab:neural_operator_timing}, 
we see that once trained, the neural operator offers on average a $380 \times$ speed up for a single solve of the semilinear elliptic PDE and a $3,600 \times$ for a single solve of the 2D Navier--Stokes PDE. 
The speed ups are much more significant in the batched case, where the time taken to evaluate the neural operator for 2,048 different samples is a fraction of that for a single PDE solve in both examples, and $18,000 \times$ and $94,000 \times$ for the total samples. Comparisons for the performance function evaluation and gradient are similar, noting that the time required to evaluate the gradient of $Q$ using the PDE is comparable to that of a Jacobian action with LU factorization reported in Table \ref{tab:pde_timing}.



\subsection{3D Navier--Stokes example}
Finally, we demonstrate the scalability of our approach by considering a 3D Navier--Stokes example for the boundary control of the flow around a bluff body in the presence of uncertain inflow conditions, with the setup analogous to that in 2D. 
This problem is not amenable to traditional PDE-based OUU methods as a single state PDE solve takes 30 minutes with parallel computation using 48 cores in one CPU node of Frontera at TACC. 
We consider a domain of dimensions $2 \times 1 \times 1$, with a bluff body that is defined similar to the 2D case, at an angle-of-attack of 30 degrees. The inflow condition at $x = 0$ is given by $e^m \mathbf{e}_1$ with $m \sim \cN(0, (- \gamma \Delta  + \delta I)^{-2})$, where $\mathbf{e}_1 = (1,0,0)$.
We choose $\gamma = 1.5$ and $\delta = 7.5$ such that the pointwise variance and correlation lengths are similar to the 2D case. 
The control variables define the normal flow velocity along the sides of the bluff body using a tensor product of quadratic B-splines. 
Figure \ref{fig:ns3d_diagram} illustrates the obstacle geometry and the controlled region.
Using a tetrahedral mesh with Taylor--Hood elements for the state and quadratic elements for the random parameter, the discretization yields $d_U = 1,035,243$, $d_M = 4,369$ (boundary degrees of freedom), and $d_Z = 50$.

We consider the CVaR optimization problem with $\beta = 0.95$, using the viscous dissipation objective with the $L^2(\Gamma_C)$ penalization term on the control. 
To solve the OUU problem, we train a MR-DINO with 448 training samples, using input rank $r_M = 100$ and output rank $r_U = 200$. 
With Jacobian training, the neural operator is able to achieve a mean relative $L^2(\Omega)$ generalization error of $1.8 \%$. 
The neural operator is then deployed to solve the OUU problem by SAA with a sample size of $N = 1,024$, using the L-BFGS algorithm to obtain the optimal controls $z_{\NN}^{*}$. 
The neural operator takes 0.13 seconds to evaluate the performance function for all 1,024 samples, 
which is over $10^7\times$ faster than using the PDE solver.

A comparison of the flow field at a random control and the optimal control $z_{\text{NN}}^{*}$ is shown in Figure \ref{fig:ns3d_diagram} for a sample inflow profile. 
In the controlled flow field, the recirculation region is shifted towards the rear of the bluff body due to the boundary control along the bluff body, thereby reducing the overall viscous dissipation rate and hence drag. 
We remark that in the 3D problem, the boundary control is only defined in the central region of the top and bottom faces, as illustrated in Figure \ref{fig:ns3d_diagram}.
Unlike the 2D case, the flow field exhibits a more complex 3D structure, as the flow also wraps around the bluff body in the $x_3$ direction, for which the boundary control is not able to completely eliminate the recirculation region behind the bluff body.

\begin{figure}[!htb]
  \centering
  \includegraphics[width=0.55\linewidth]{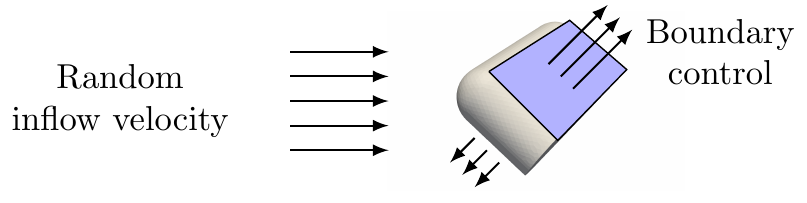}

  \includegraphics[width=0.425\linewidth, trim=80 20 80 40]{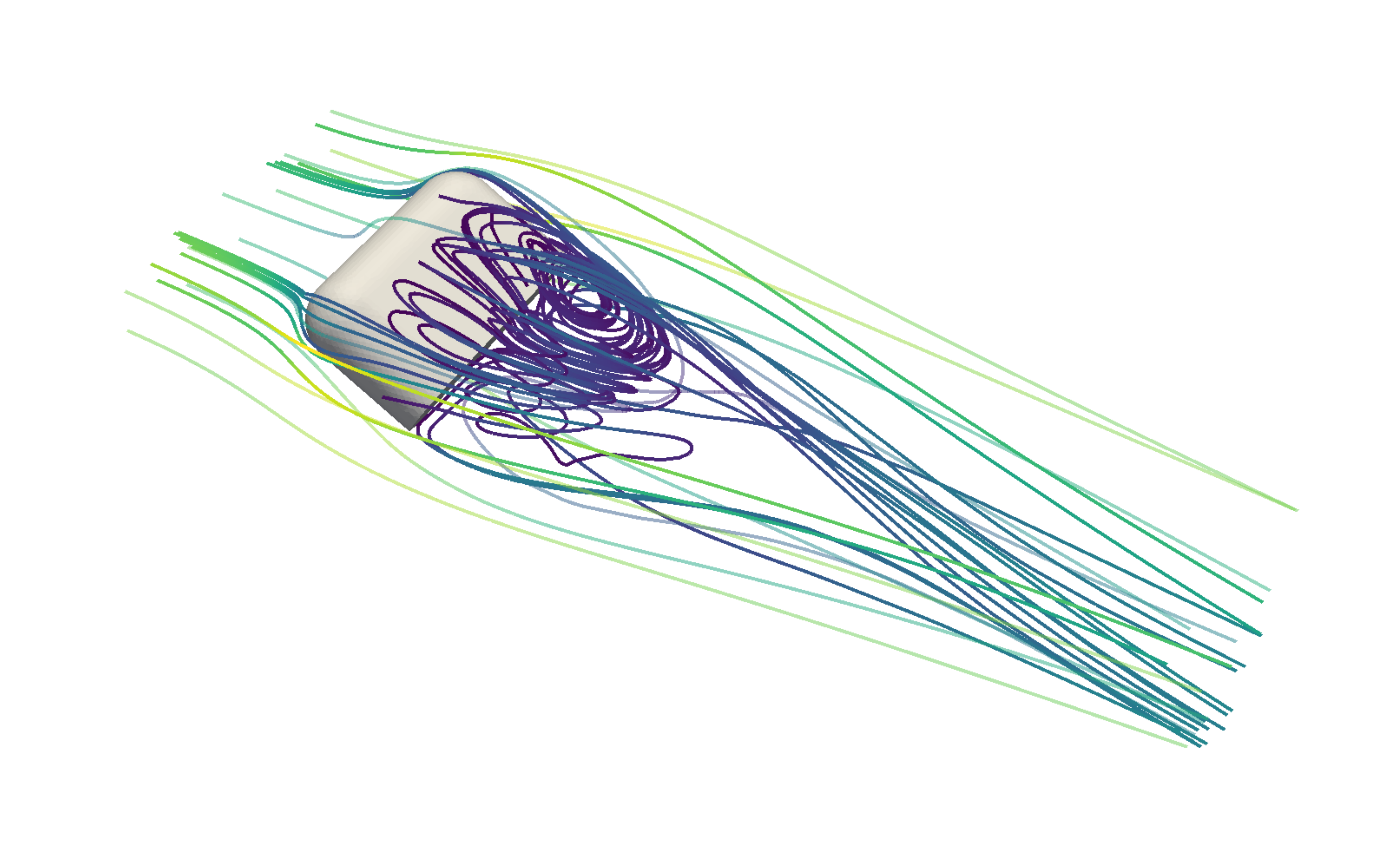}
  \includegraphics[width=0.425\linewidth, trim=80 20 80 40]{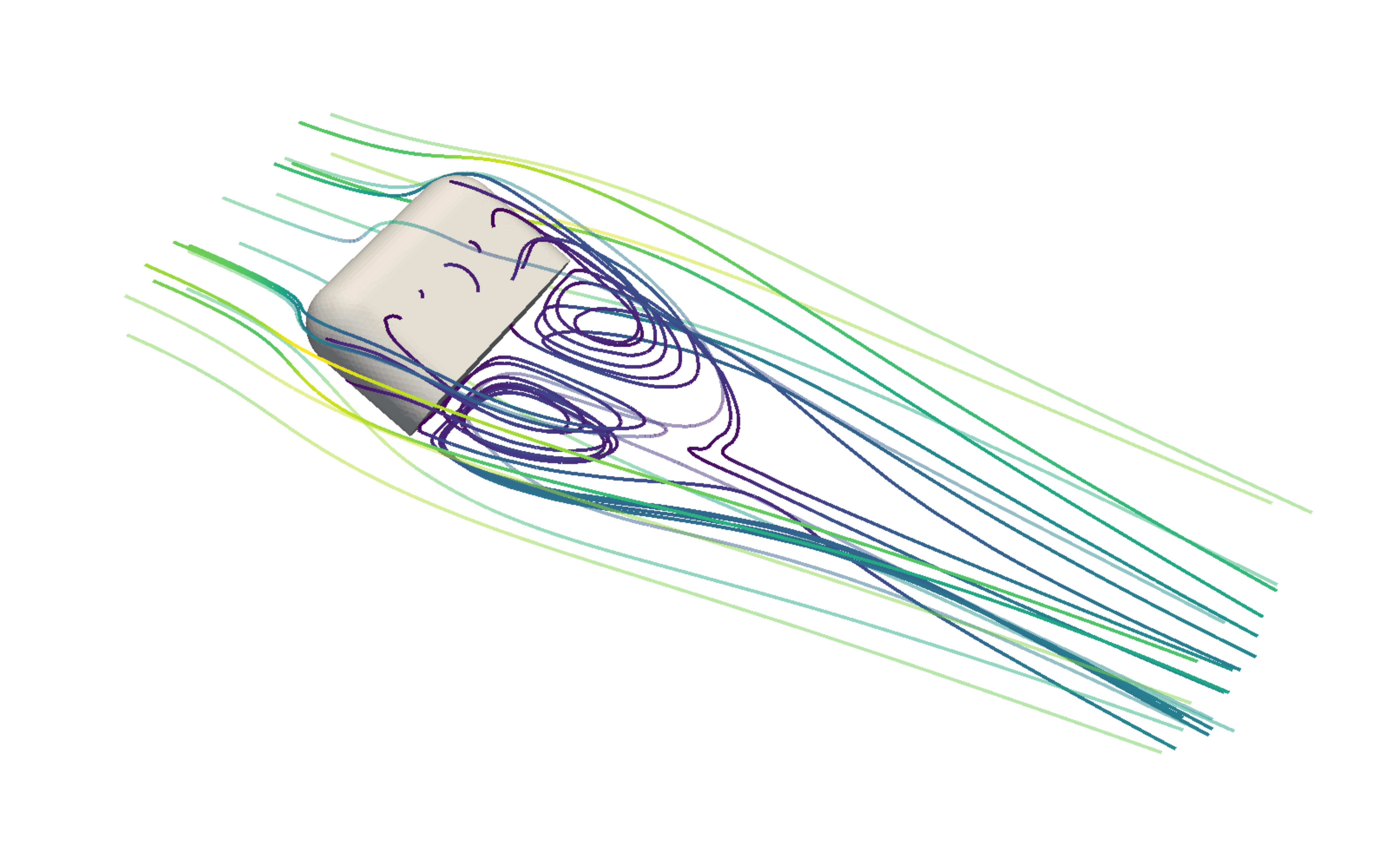}

  \caption{Top: the obstacle geometry for the 3D Navier--Stokes control problem. The control variables prescribe the normal velocity on the top and bottom faces of the obstacle over a $0.2 \times 0.4$ plane. Tangential velocity is set to zero. Bottom: Streamlines for a sample of the flow field with a random control (left) and an optimal control (right) computed using the MR-DINO with 448 training samples. The SAA optimization problem with MR-DINO is solved in 53 seconds.}
  \label{fig:ns3d_diagram}
  \vskip -0.5cm
\end{figure}



\section{Conclusions}\label{section:conclusions}

In this work, we have presented a novel framework for solving PDE-constrained OUU problems using neural operators to approximate the mapping from the joint input spaces of the uncertain parameters and optimization variables to the solution of the underlying PDE. 
The key contribution is the DINO training of neural operators on the derivatives of the solution map with respect to the optimization variable, along with the use of reduced basis architectures that enable scalable and efficient data generation and training.

Through our numerical experiments, we have demonstrated that reduced basis neural operators can be constructed to efficiently solve a range of PDE-constrained OUU problems.
In particular, we consistently showed that Jacobian training was extremely effective in improving the function approximation, the gradients, and critically, the quality of the optimization solution.
We also observed that the MR-DINOs were more cost-effective for OUU than standard SAA-based PDE solutions, with over 10$\times$ fewer state PDE solves for the same accuracy. Moreover, once trained,
the online evaluation cost with the neural operator approximation was reduced by several orders of magnitude, giving rise to the potential for real-time solution of PDE-constrained OUU problems. 
We remark that when further accuracy is required, MR-DINO can be employed in a multifidelity Monte Carlo framework \cite{NgWillcox14} to guarantee convergence to the exact OUU solution. In this setting, the low construction cost of MR-DINO relative to its accuracy makes it an appealing control variate.

Our demonstrations focused on steady-state control problems with finite dimensional optimization variables. In future work, we will apply our method to the solution of time-dependent OUU problems with function-valued optimization variables and more general risk measures and probability constraints. Furthermore, strategies for selecting the training distribution of the control variable $\nu_z$ can also be explored in future work.

\bibliographystyle{siamplain}
\bibliography{spoon,references}

\end{document}